\newtheorem{example}{Example}
\newtheorem{remark}{Remark}
\newcommand{\dtn}{\Delta t_n}
\newcommand{\dt}{\Delta t}
\newcommand{\nexp}{N_{\text{exp}}}
\title{Fast evaluation of the Caputo fractional derivative and its
  applications to fractional diffusion equations}
\author{Shidong Jiang
\thanks{Department of Mathematical Sciences, New Jersey Institute of Technology,
  Newark, NJ 07102, USA ({\tt shidong.jiang@njit.edu}).
  The research of this author was supported by NSF under grant DMS-1418918.}
\and
Jiwei Zhang
\thanks{Beijing Computational Science Research Center, Beijing 100094, China
  ({\tt jwzhang@csrc.ac.cn}).
  The research of this author was supported by the National Natural Science Foundation of China under grant
91430216.}
\and
Qian Zhang
\thanks{Beijing Computational Science Research Center, Beijing, China ({\tt qianzhang@csrc.ac.cn}).}
\and
Zhimin Zhang
\thanks{Beijing Computational Science Research Center, Beijing, China ({\tt zmzhang@csrc.ac.cn}),
and Department of Mathematics, Wayne State University, Detroit, MI 48202, USA ({\tt zzhang@math.wayne.edu}).
  The research of this author was supported in part by the National Natural Science Foundation of China under
  grants 11471031, 91430216, and by NSF under grant DMS-1419040.}
}
\begin{document}

\maketitle

\begin{abstract}
  We present an efficient algorithm for the evaluation of the Caputo
  fractional derivative $_0^C\!D_t^\alpha f(t)$ of order $\alpha\in (0,1)$, which
  can be expressed as a convolution of $f'(t)$ with the kernel
  $t^{-\alpha}$. The algorithm is based on an efficient 
  sum-of-exponentials approximation for the kernel $t^{-1-\alpha}$
  on the interval $[\dt, T]$ with a uniform absolute error
  $\varepsilon$, where the number of exponentials $\nexp$ needed is of the order
  $O\left(\log\frac{1}{\varepsilon}\left(
    \log\log\frac{1}{\varepsilon}+\log\frac{T}{\dt}\right)
   +\log\frac{1}{\dt}\left(
     \log\log\frac{1}{\varepsilon}+\log\frac{1}{\dt}\right)
    \right)$.
  As compared with the direct method, the resulting algorithm reduces the storage
  requirement from $O(N_T)$ to $O(\nexp)$ and the overall computational
  cost from $O(N_T^2)$ to $O(N_T\nexp)$ with $N_T$ the total number of time
  steps. Furthermore, when the fast evaluation scheme of the Caputo derivative is
  applied to solve the fractional
  diffusion equations, the resulting algorithm requires
  only $O(N_S\nexp)$ storage and $O(N_SN_T\nexp)$ work with $N_S$
  the total number of points in space; whereas
  the direct methods require $O(N_SN_T$) storage
  and $O(N_SN_T^2)$ work. The complexity
  of both algorithms is nearly optimal since $\nexp$ is of the order
  $O(\log N_T)$ for $T\gg 1$ or $O(\log^2N_T)$ for $T\approx 1$ for fixed
  accuracy $\varepsilon$. We also present a detailed stability and error analysis
  of the new scheme for solving linear fractional diffusion equations. The
  performance of the new algorithm is illustrated via several numerical examples.
  Finally, the algorithm can be parallelized in a straightforward manner.
\end{abstract}

\begin{keywords}
  Fractional derivative, Caputo derivative, sum-of-exponentials approximation,
  fractional diffusion equation, fast convolution algorithm, stability analysis.
\end{keywords}

\begin{AMS}
33C10, 33F05, 35Q40, 35Q55, 34A08, 35R11, 26A33.
\end{AMS}

\pagestyle{myheadings}
\thispagestyle{plain}
\markboth{S. Jiang, J. Zhang, Q. Zhang, and Z. Zhang}
{Fast Evaluation of the Caputo Fractional Derivative}

\section{Introduction} \label{sec:intro}

Over the last few decades the fractional calculus has received much attention
of both physical scientists and mathematicians since they can faithfully
capture the dynamics of physical process in many applied sciences including
biology, ecology, and control system. The anomalous diffusion, also referred to as
the non-Gaussian process, has been observed and validated in many phenomena with
accurate physical measurement \cite{KST06,KH10,MK00,OS74,P99}. The mathematical
and numerical analysis of the factional calculus became a subject of intensive
investigations. In literature, there are several definitions of fractional
time derivatives including the Riemann-Liouville (RL) fractional derivative,
the Gr\"unwald-Letnikov (GL) fractional derivative, and the Caputo fractional derivative (see, for example, \cite{P99}
for details). It is easy to see that the GL fractional derivative is equivalent
to the RL fractional derivative. Both require fractional-type initial values,
whose physical interpretation is not quite clear. On the other hand, the Caputo
fractional derivative takes the integer-order differential equations as the
initial value, and the Caputo fractional derivative of a constant is zero,
just as one would expect for the usual derivative.

In this paper, we are concerned with the evaluation of the Caputo fractional
derivative, which is defined by the formula
\begin{equation}\label{defc}
_{0}^{C}\!D_{t}^{\alpha}u(t)
  =\frac{1}{\Gamma(m-\alpha)}\int^{t}_{0}\frac{ u^{(m)} (\tau)}
  {(t-\tau)^{\alpha +1-m }}d\tau,\quad m-1<\alpha<m, \quad m\in \mathbb{Z}.
\end{equation}
One of the popular schemes of discretizing the Caputo fractional
derivative is the so-called $L1$ approximation
\cite{Gao12,Gao13,GSJCP11,LHJCP05,LDCMA11,LXJCP07,LXJCP11,Sun:06},
which is simply based on the piecewise
linear interpolation of $u$ on each subinterval. For $0<\alpha<1$, the order of
accuracy of the $L1$ approximation is $2-\alpha$. There are
also high-order discretization scheme by using piecewise high-order
polynomial interpolation of $u$
\cite{CXJCP13,GS14,LCYJCP11,OAMC06}.
These methods require the storage of all previous function values
$u(0), u(\Delta t), \cdots, u(n\Delta t)$ and $O(n)$ flops at the $n$th step. Thus
it requires on average $O(N_T)$ storage and
the total computational cost is $O(N_T^2)$ with $N_T$ the total number of time
steps, which forms a bottleneck for long time simulations, especially when one
tries to solve a fractional partial differential equation.

Here we present an efficient scheme for the evaluation of the
Caputo fractional derivative $_{0}^{C}\!D_{t}^{\alpha}u(t)$ for $0<\alpha<1$.
We first split the convolution integral in \eqref{defc} into two parts -
a local part containing the integral from $t-\dt$ to $t$,
and a history part containing the integral from $0$ to $t-\dt$.
The local part is approximated using the standard $L1$ approximation.
For the history part, integration by part leads to a convolution integral
of $u$ with the kernel $t^{-1-\alpha}$. We show that
$t^{-1-\alpha}$ ($0<\alpha<1$) admits an efficient sum-of-exponentials
approximation on the interval $[\dt, T]$ with a uniform absolute error
$\varepsilon$ and the number of exponentials needed is of the order

\begin{equation}\label{nexp}
\nexp= O\left(\log\frac{1}{\varepsilon}\left(
    \log\log\frac{1}{\varepsilon}+\log\frac{T}{\dt}\right)
   +\log\frac{1}{\dt}\left(
     \log\log\frac{1}{\varepsilon}+\log\frac{1}{\dt}\right)
     \right).
\end{equation}
That is, for fixed precision $\varepsilon$, we have $\nexp=O(\log N_T)$ for
$T\gg 1$ or $\nexp=O(\log^2N_T)$ for $T\approx 1$ assuming that
$N_T=\frac{T}{\dt}$. The approximation can be used to accelerate the
evaluation of the convolution via the standard recurrence relation.
The resulting algorithm has nearly optimal complexity - $O(N_T\nexp)$ work
and $O(\nexp)$ storage.

We would like to remark here that sum-of-exponentials approximations have
been applied to speed up the evaluation of the convolution integrals in
many applications. In fact, they have been used to accelerate the evaluation
of the heat potentials in \cite{greengard1,greengard2,jiang5}, and
the evaluation of the exact nonreflecting boundary conditions for the wave,
Schr\"{o}dinger, and heat equations in \cite{alpert,alpert2,jiang1,jiang2,jiang3,Zheng}.
There are also many other efforts to accelerate the evaluation of fractional
derivatives; see, for example, \cite{LS13,WM12,PS12,WW10,WT12} and references therein.

We then apply the fast evaluation scheme of the Caputo fractional derivative
to study the fractional diffusion equations (both linear and nonlinear).
We demonstrate that it is straightforward to incorporate the fast evaluation
scheme of the Caputo fractional derivative into the existing standard finite
difference schemes for solving the fractional diffusion equations.
The resulting algorithm for solving the fractional PDEs is both efficient
and stable. The computational cost of the new algorithm is $O(N_S N_T\nexp)$
as compared with $O(N_SN_T^2)$ for direct methods and the
storage requirement is only $O(N_S\nexp)$ as compared with
$O(N_S N_T)$ for direct methods, since one needs to store the solution
in the whole computational spatial domain at all times. Furthermore, we have
carried out a rigorous and detailed analysis to prove that our scheme
is unconditionally stable with respect to arbitrary step sizes.
With these two properties, our scheme provides an efficient and reliable
tool for long time large scale simulation of fractional PDEs.

The paper is organized as follows. In Section 2, we describe the fast algorithm
for the evaluation of the Caputo fractional derivative and provide rigorous
error analysis of our discretization scheme. In Section 3, we apply our fast
algorithm to solve the linear fractional diffusion PDEs
and present the stability and error analysis for the overall scheme. In Section 4,
we study the nonlinear fractional diffusion PDEs and demonstrate that our
fast algorithm has the same order of convergence as the direct method
in this case. Finally, we conclude our paper with a brief discussion
on the extension and generalization of our scheme.

\section{Fast Evaluation of the Caputo Fractional Derivative}
In this section, we consider the fast evaluation of the Caputo fractional
derivative for $0<\alpha<1$. Suppose that we would like to evaluate the
Caputo fractional derivative on the interval $[0,T]$ over a set of
grid points $\Omega_t := \{t_n,\ n=0,1,\cdots,N_T\}$, with $t_0=0$, $t_{N_T}=T$,
and $\dtn = t_n -t_{n-1}$. We will simply denote $u(t_n)$ by $u^n$.

We first split the convolution integral in \eqref{defc}
into a sum of local part and history part, that
is,
\begin{align}\label{2.0.1}
  {^{C}_{0}}\!{D}_t^{\alpha}u^{n} &=
  \frac{1}{\Gamma(1-\alpha)} \int_0^{t_n} \frac{u'(s)ds}{(t_n-s)^\alpha} \nonumber\\
&= \frac{1}{\Gamma(1-\alpha)} \int_{t_{n-1}}^{t_n} \frac{u'(s)ds}{(t_n-s)^\alpha}
  + \frac{1}{\Gamma(1-\alpha)} \int_0^{t_{n-1}} \frac{u'(s)ds}{(t_n-s)^\alpha}
  \nonumber\\
&:= C_l(t_n)+C_h(t_n),
\end{align}
where the last equality defines the local part and the history part,
respectively. For the local part, we apply the standard $L1$ approximation,
which approximates $u(s)$ on $[t_{n-1}, t_n]$ by a linear polynomial
(with $u^{n-1}$ and $u^n$ as the interpolation nodes) or $u'(s)$ by a constant
$\frac{u(t_n) - u(t_{n-1})}{\dtn}$. We have
\begin{align}\label{2.0.2}
C_l(t_n)
\approx \frac{u(t_n) - u(t_{n-1})}{\dtn \Gamma(1-\alpha)}
\int_{t_{n-1}}^{t_n} \frac{1}{(t_n-s)^\alpha}ds = \frac{u(t_n) - u(t_{n-1})}
    {\dtn^\alpha \Gamma(2-\alpha)} .
\end{align}
For the history part, we apply the integration by part to eliminate
$u'(s)$ and have 
\begin{align}\label{2.0.3}
  C_h(t_n)&=  \frac{1}{\Gamma(1-\alpha)} \int_0^{t_{n-1}}\frac{u'(s)ds}{(t_n-s)^\alpha}\nonumber\\
  &= \frac{1}{\Gamma(1-\alpha)} \left[\frac{u(t_{n-1})}{\dt_n^\alpha}
    - \frac{u(t_0)}{t_{n}^\alpha} - \alpha \int_0^{t_{n-1}}
    \frac{u(s)ds}{(t_n-s)^{1+\alpha}} \right].
\end{align}
\subsection{Efficient Sum-of-exponentials Approximation for the Power Function}
We now show that the convolution kernel $t^{-1-\alpha}$ ($0<\alpha<1$)
can be approximated via a sum-of-exponentials approximation efficiently
on the interval $[\Delta t, T]$ with the absolute error $\varepsilon$. That is,
there exist positive real numbers $s_i$ and $w_i$ ($i=1,\cdots,\nexp$) such that
for $0<\alpha<1$,
\begin{equation}\label{2.2.1}
  \left|\frac{1}{t^{1+\alpha}}-\sum_{i=1}^{\nexp} \omega_i e^{-s_i t}\right| \leq
  \varepsilon, \quad t\in[\Delta t, T],
\end{equation}
where $\nexp$ is given by \eqref{nexp}. Our proof is constructive and the error
bound is explicit. We start from the following integral representation
of the power function.

\begin{lemma}\label{soe1}
  For any $\beta>0$,
  \begin{equation}\label{2.1.1}
    \frac{1}{t^\beta} = \frac{1}{\Gamma(\beta)}\int_0^\infty e^{-ts}s^{\beta-1}ds.
  \end{equation}
\end{lemma}
\begin{proof}
  This follows from a change of variable $x=ts$ and the integral definition of the $\Gamma$ function
  \cite{handbook}.
\end{proof}

\eqref{2.1.1} can be viewed as a representation of $t^{-\beta}$ using an infinitely many (continuous)
exponentials. In order to obtain an efficient sum-of-exponentials approximation, we first truncated
the integral to a finite interval, then subdivide the finite interval into a set of dyadic intervals and
discretize the integral on each dyadic interval with proper quadratures.

We now assume $1<\beta<2$, which is the case we are concerned with in this paper.
\begin{lemma}\label{soe2}
  For $t\geq \delta >0$,
  \begin{equation}\label{2.1.2}
    \left|\frac{1}{\Gamma(\beta)}\int_p^\infty e^{-ts}s^{\beta-1}ds\right|\leq e^{-\delta p}
    2^{\beta-1}\left(\frac{p^\beta}{\Gamma(\beta)}+\frac{1}{\delta^\beta}\right).
  \end{equation}
\end{lemma}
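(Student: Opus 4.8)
The plan is to first use the hypothesis $t\ge\delta$ to strip the $t$-dependence out of the integrand. Since $s\ge p>0$ we have $ts\ge\delta s$, hence $e^{-ts}\le e^{-\delta s}$, and because the integrand is nonnegative the absolute value on the left of \eqref{2.1.2} is redundant. This reduces the claim to the single, clean estimate
$\frac{1}{\Gamma(\beta)}\int_p^\infty e^{-\delta s}s^{\beta-1}\,ds\le e^{-\delta p}2^{\beta-1}\bigl(p^\beta/\Gamma(\beta)+1/\delta^\beta\bigr)$, which now depends only on $\delta$, $p$, and $\beta$ and already carries the correct exponential decay rate.

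Next I would shift the variable by setting $s=p+r$, so that the integral becomes $e^{-\delta p}\int_0^\infty e^{-\delta r}(p+r)^{\beta-1}\,dr$. This pulls out the boundary factor $e^{-\delta p}$ appearing on the right-hand side, leaving the purely algebraic task of bounding $\int_0^\infty e^{-\delta r}(p+r)^{\beta-1}\,dr$ by $2^{\beta-1}\bigl(p^\beta+\Gamma(\beta)/\delta^\beta\bigr)$.

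The crux, and the step I expect to demand the most care, is choosing the decomposition of $[0,\infty)$ that makes the factor $2^{\beta-1}$ and the two separate terms emerge exactly rather than as an unbalanced remainder. The key structural fact is that $1<\beta<2$ forces the exponent $\beta-1$ into $(0,1)$, so $x\mapsto x^{\beta-1}$ is increasing, which is what makes the crude majorizations below monotone. I would split at $r=p$: on $[0,p]$ use $p+r\le 2p$ to get $(p+r)^{\beta-1}\le 2^{\beta-1}p^{\beta-1}$, together with $e^{-\delta r}\le 1$, so that this piece is at most $2^{\beta-1}p^{\beta-1}\!\int_0^p dr=2^{\beta-1}p^\beta$; on $[p,\infty)$ use $p+r\le 2r$ to get $(p+r)^{\beta-1}\le 2^{\beta-1}r^{\beta-1}$, so that this piece is at most $2^{\beta-1}\!\int_0^\infty e^{-\delta r}r^{\beta-1}\,dr=2^{\beta-1}\Gamma(\beta)/\delta^\beta$, where the last equality is Lemma~\ref{soe1} applied with rate $\delta$. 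Summing the two pieces and dividing by $\Gamma(\beta)$ reproduces the stated constant exactly.

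The subtlety worth flagging is that the two one-sided bounds $p+r\le 2p$ and $p+r\le 2r$ are each tight only on their own subinterval, which is precisely why splitting at $r=p$ is essential. A single substitution combined with the subadditivity $(p+r)^{\beta-1}\le p^{\beta-1}+r^{\beta-1}$ would instead yield the term $p^{\beta-1}/\delta$ in place of $p^\beta$ and would not recover the clean form of \eqref{2.1.2}; so the deliberate, interval-dependent choice of majorant is the one nonroutine decision in the argument.
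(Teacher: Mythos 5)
Your proof is correct and takes essentially the same route as the paper's: the shift $s=p+r$ to extract the boundary factor (the paper pulls out $e^{-tp}$ and then invokes $t\ge\delta$, while you bound $e^{-ts}\le e^{-\delta s}$ first), the split of the shifted integral at $r=p$ with the two majorants $(p+r)^{\beta-1}\le(2p)^{\beta-1}$ and $(p+r)^{\beta-1}\le(2r)^{\beta-1}$, and the evaluation of the extended tail integral via Lemma~\ref{soe1} with rate $\delta$. The estimates and constants match the paper's chain \eqref{2.1.3} exactly.
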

\begin{proof}
\begin{equation}\label{2.1.3}
\begin{aligned}
\left|\frac{1}{\Gamma(\beta)}\int_p^\infty e^{-ts}s^{\beta-1}ds\right|
&= \left|\frac{1}{\Gamma(\beta)}e^{-tp}\int_0^\infty e^{-tx}(x+p)^{\beta-1}dx\right|\\
&\leq e^{-\delta p}\frac{1}{\Gamma(\beta)}\left(\int_0^p (2p)^{\beta-1}dx
+\int_p^\infty e^{-\delta x}(2x)^{\beta-1}dx\right)\\
&\leq e^{-\delta p}\frac{2^{\beta-1}}{\Gamma(\beta)}\left(p^\beta
+\int_0^\infty e^{-\delta x}x^{\beta-1}dx\right)
\\
&\leq e^{-\delta p}2^{\beta-1}
\left(\frac{p^\beta}{\Gamma(\beta)}
+\frac{1}{\delta^\beta}\right).
\\
\end{aligned}
\end{equation}
\end{proof}

\begin{remark}
  The truncation error can be made arbitrarily small for fixed $\delta$ by choosing sufficiently large $p$.
  Usually we have $\delta<1$ and if one would like to bound the truncation error by $\varepsilon<1/e$,
  then $\delta p>1$ or $p>1/\delta$, and
  \begin{equation}\label{2.1.4}
    \begin{aligned}
e^{-\delta p}2^{\beta-1}\left(\frac{p^\beta}{\Gamma(\beta)}+\frac{1}{\delta^\beta}\right)
&<e^{-\delta p}2^{\beta-1}\left(\frac{p^\beta}{\Gamma(\beta)}+p^\beta\right)<5e^{-\delta p}p^2.
    \end{aligned}
    \end{equation}
  Thus, $p=O\left(\log\left(\frac{1}{\varepsilon\delta}\right)/\delta\right)$ is sufficient
  to bound the truncation error by $\varepsilon$.
\end{remark}

We now proceed to discuss the discretization error for the integral on the interval $[0,p]$.
Similar to \cite{jiang2}, we will analyze the discretization error
on each dyadic interval using the Gauss-Legendre quadrature.
\begin{lemma}\label{soe3}
  Consider a dyadic interval $[a, b] = [2^j, 2^{j+1}]$ and let $s_1,\cdots, s_n$ and
  $w_1,\cdots,w_n$ be the nodes and weights for $n$-point Gauss-Legendre
  quadrature on the interval. Then for $\beta\in (1,2)$ and $n>1$,
  \begin{equation}\label{2.1.5}
\left|\int_a^b e^{-ts}s^{\beta-1}ds-\sum_{k=1}^n w_ks_k^{\beta-1}e^{-s_k t}\right|
    <2^{\beta - \frac{3}{2}}\pi  a^\beta \left(\frac{e^{1/e}}{4}\right)^{2n}.
 \end{equation}

\end{lemma}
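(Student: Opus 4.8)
The plan is to reduce \eqref{2.1.5} to the standard Gauss--Legendre remainder on a reference interval and then estimate the $2n$-th derivative of the integrand by a Cauchy integral, the only subtlety being the branch-point singularity of $s^{\beta-1}$ at $s=0$. First I would rescale the dyadic interval $[a,b]=[2^j,2^{j+1}]$ to $[-1,1]$ through the affine map $s=\frac{a+b}{2}+\frac{b-a}{2}x=\frac{3a}{2}+\frac{a}{2}x$, so that $b-a=a$. A feature worth recording here is that the branch point $s=0$ is sent to the fixed point $x=-3$, independent of the level $j$; this scale invariance is exactly what makes a single geometric factor $(\cdot)^{2n}$ valid uniformly on every dyadic interval. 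Writing $f(s)=e^{-ts}s^{\beta-1}$, the classical $n$-point Gauss--Legendre remainder gives, for some $\sigma\in(a,b)$,
\[
\int_a^b f(s)\,ds-\sum_{k=1}^n w_k f(s_k)=\frac{(b-a)^{2n+1}(n!)^4}{(2n+1)\,[(2n)!]^3}\,f^{(2n)}(\sigma),
\]
so it remains to control $f^{(2n)}(\sigma)$ and to simplify the factorial prefactor.

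Second, I would bound $f^{(2n)}(\sigma)$ by Cauchy's estimate on a circle centred at $\sigma$. Since $f$ is analytic everywhere except at the branch point $s=0$, the admissible radius is limited by $\operatorname{dist}(\sigma,0)=\sigma\ge a$; I would take a circle of radius $r$ slightly less than $\sigma$ and pass to the limit $r\uparrow\sigma$. On this circle $z=\sigma+re^{i\theta}$ one has $\operatorname{Re}z=\sigma+r\cos\theta\ge0$, hence $|e^{-tz}|=e^{-t\operatorname{Re}z}\le1$ for every $t>0$, while $|z^{\beta-1}|=|z|^{\beta-1}\le(2\sigma)^{\beta-1}$. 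Therefore $|f^{(2n)}(\sigma)|\le (2n)!\,(2\sigma)^{\beta-1}\sigma^{-2n}$ in the limit, and substituting this together with $b-a=a$ yields a bound of the form $\frac{a^{2n+1}(n!)^4}{(2n+1)[(2n)!]^2}\,(2\sigma)^{\beta-1}\sigma^{-2n}$. The worst case over $\sigma\in[a,2a]$ is $\sigma=a$ (the exponent $\beta-1-2n$ is negative), which collapses the $s$-dependence to the single scale $a^\beta$, matching the right-hand side of \eqref{2.1.5}.

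Third, I would clean up the constant. The factorial ratio is $\frac{(n!)^4}{[(2n)!]^2}=\binom{2n}{n}^{-2}$, and a standard lower bound for the central binomial coefficient (the source of the $\pi$) turns this into $16^{-n}$ times a polynomial factor in $n$; dividing out the remaining $(2n+1)$ leaves a low-degree polynomial in $n$. That polynomial is absorbed into the geometric factor by the elementary inequality $n^{1/n}\le e^{1/e}$ (equivalently $n^2\le e^{2n/e}=(e^{1/e})^{2n}$), which is precisely what converts $16^{-n}=4^{-2n}$ into the stated $(e^{1/e}/4)^{2n}$ and produces the constant $2^{\beta-\frac32}\pi$ after tracking the $2^{\beta-1}$ coming from $(2\sigma)^{\beta-1}$. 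The hypothesis $n>1$ enters exactly here, to guarantee that the exponential factor dominates the polynomial one.

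The step I expect to be the main obstacle is the second one: the integrand is not entire, so the Cauchy radius is pinned at the distance to the branch point, and one must simultaneously check that the exponential factor does not blow up on the contour --- which is why verifying $\operatorname{Re}z\ge0$ on the circle (and hence $|e^{-tz}|\le1$ for all $t>0$) is the crucial point. Equivalently, in the Bernstein-ellipse formulation of \cite{jiang2} one would determine the largest ellipse with foci $\pm1$ avoiding $x=-3$ and bound $\max|f|$ on it; the derivative form above is a convenient route to the explicit constant. Everything else is routine factorial and maximum-modulus bookkeeping.
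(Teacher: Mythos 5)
Your proposal is correct, and it takes a genuinely different route from the paper at the one nontrivial step. Both arguments start from the same $n$-point Gauss--Legendre remainder with the factor $\frac{(b-a)^{2n+1}(n!)^4}{(2n+1)[(2n)!]^3}$ and both exploit $b-a=a$ on a dyadic interval; the difference is how $D_s^{2n}\left(e^{-st}s^{\beta-1}\right)$ is controlled. The paper (see \eqref{2.1.9}--\eqref{2.1.11}) expands by the Leibniz rule, bounds $\left|D_s^k s^{\beta-1}\right|\leq \frac{k!}{4}s^{\beta-k-1}$ via $|(\beta-1)\cdots(\beta-k)|\leq k!/4$, resums the binomial series into $\sqrt{2n}\,s^{\beta-1}e^{-st}\left(t+\frac{2n}{es}\right)^{2n}$, and only at the end discards the $t$-dependence by maximizing $e^{-x}\left(\frac{ex}{8n}+\frac{1}{4}\right)^{2n}$ over $x=at>0$ in \eqref{2.1.13} --- that maximization is where $e^{1/e}$ structurally originates. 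You instead invoke analyticity and Cauchy's estimate on disks $|z-\sigma|\leq r$ with $r\uparrow\sigma$; the checks you single out are exactly the right ones and all hold: the closed disk lies in $\{\operatorname{Re}z\geq\sigma-r>0\}$, hence avoids the branch cut $(-\infty,0]$ and gives $|e^{-tz}|\leq 1$ for all $t>0$, while $|z^{\beta-1}|=|z|^{\beta-1}\leq(2\sigma)^{\beta-1}$ since $\beta-1>0$. This yields $|f^{(2n)}(\sigma)|\leq(2n)!\,(2\sigma)^{\beta-1}\sigma^{-2n}$, and with the worst case $\sigma=a$ and the Stirling bounds \eqref{2.1.7} (which give $\binom{2n}{n}^{-2}=\frac{(n!)^4}{[(2n)!]^2}<4\pi n\,16^{-n}$, so $\frac{4\pi n}{2n+1}<2\pi$) you arrive at the bound $2^{\beta}\pi a^{\beta}4^{-2n}$, which implies \eqref{2.1.5} for $n\geq 2$ because $e^{2n/e}\geq e^{4/e}>2^{3/2}$. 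Two remarks on the trade-off. First, your bound is in fact slightly stronger than the stated one: in your route the factor $e^{1/e}$ is pure slack absorbed via $n^{1/n}\leq e^{1/e}$, not a structural feature as in the paper. Second, what you give up is the exponential factor $e^{-at}$ retained in the paper's intermediate estimate \eqref{2.1.12} before maximizing; this decay is irrelevant to the lemma as stated, but it is the kind of information that can sharpen node counts on the large intervals where $at\gg 1$, and your maximum-modulus bound with $|e^{-tz}|\leq 1$ on circles of radius approaching $\sigma$ cannot recover it (shrinking the radius to, say, $\sigma/2$ restores decay $e^{-t\sigma/2}$ at the cost of weakening the geometric rate). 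Your Bernstein-ellipse observation is also apt: the fixed preimage $x=-3$ of the branch point under the affine rescaling is precisely the scale invariance that makes one geometric rate valid on every dyadic level.
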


\begin{proof}
  For any interval $[a, b]$, the standard estimate for $n$-point Gauss-Legendre
  quadrature \cite{handbook} yields
  \begin{equation}\label{2.1.6}
\left|\int_a^b e^{-ts}s^{\beta-1}ds-\sum_{k=1}^n w_ks_k^{\beta-1}e^{-s_k t}\right|
\le \frac{(b-a)^{2n+1}}{2n+1}\frac{(n!)^4}{[(2n)!]^3}
\max_{s\in(a,b)}\left|D^{2n}_s (e^{-st}s^{\beta-1})\right|,
 \end{equation}
  where $D_s$ denotes the derivative with respect to $s$.\\
  Applying Stirling's approximation \cite{handbook}
  \begin{equation}\label{2.1.7}
\sqrt{2\pi}n^{n+1/2}e^{-n}<n!<2\sqrt{\pi}n^{n+1/2}e^{-n},
  \end{equation}
  we obtain
  \begin{equation}\label{2.1.8}
    \frac{(n!)^4}{[(2n)!]^3}<2\sqrt{\pi}\left(\frac{e}{8}\right)^{2n}
    \frac{\sqrt{n}}{n^{2n}}.
  \end{equation}
  Observe now $|(\beta-1)\cdots(\beta-k)| \leq k!/4$ for $k>1$, and thus 
  \begin{equation}\label{2.1.9}
\begin{aligned}
\left|D_s^k s^{\beta-1}\right| &= s^{\beta-1}  \leq \frac{\sqrt{\pi}}{2}\sqrt{2n}(2n)^ke^{-k}s^{\beta-k-1}, \quad \text{for } k =0,\\
\left|D_s^k s^{\beta-1}\right| &= (\beta-1) s^{\beta-k-1} \leq  \frac{\sqrt{\pi}}{2}\sqrt{2n}(2n)^ke^{-1}s^{\beta-k-1} , \quad \text{for } k=1,\\
\left|D_s^k s^{\beta-1}\right| &= \left|(\beta-1)\cdots(\beta-k) s^{\beta-k-1}\right|\leq \frac{k!}{4} s^{\beta-k-1}\\
&\leq \frac{\sqrt{\pi}}{2}k^{k+1/2}e^{-k}s^{\beta-k-1}\leq \frac{\sqrt{\pi}}{2}\sqrt{2n}(2n)^ke^{-k}s^{\beta-k-1}, \quad \text{for } k>1.\\
\end{aligned}
\end{equation}
  We also have
  \begin{equation}\label{2.1.10}
    D_s^{2n-k} e^{-st} =(-t)^{2n-k}e^{-st}.
  \end{equation}
 Combining \eqref{2.1.9}, \eqref{2.1.10} with the Leibniz rule, we obtain
  \begin{equation}\label{2.1.11}
    \begin{aligned}
      \left|D^{2n}_s (e^{-st}s^{\beta-1})\right|
      &= \left|\sum_{k=0}^{2n}   \left(\begin{array}{c}2n\\k\\ \end{array}\right)
      \left(D_s^{2n-k}e^{-st}\right) \left(D_s^k s^{\beta-1}\right)\right|\\
      &\leq \sqrt{2n}s^{\beta-1}e^{-st}
      \sum_{k=0}^{2n}   \left(\begin{array}{c}2n\\k\\ \end{array}\right)
      t^{2n-k} (2n)^ke^{-k}s^{-k}\\
      &=\sqrt{2n}s^{\beta-1}e^{-st} \left(t+\frac{2n}{e s}\right)^{2n}.
    \end{aligned}
  \end{equation}
 Combining \eqref{2.1.6}, \eqref{2.1.8}, \eqref{2.1.11} and $b-a=a$, we have
\begin{align}
  \left|\int_a^b e^{-ts}s^{\beta-1}ds-\sum_{k=1}^n w_ks_k^{\beta-1}e^{-s_k t}\right|
  &\leq \frac{\sqrt{2}\pi}{2}(b-a)s^{\beta-1}e^{-st}\left(\frac{e(b-a)t}{8n}+\frac{b-a}{4s}\right)^{2n}\nonumber\\
  &\leq 2^{\beta - \frac{3}{2}}\pi a^\beta e^{-at}\left(\frac{eat}{8n}+\frac{1}{4}\right)^{2n}.\label{2.1.12}
\end{align}
And \eqref{2.1.5} follows from the fact
\begin{equation}\label{2.1.13}
  \max_{x>0}e^{-x}\left(\frac{ex}{8n}+\frac{1}{4}\right)^{2n}
  = \left(\frac{e^{1/e}}{4}\right)^{2n}, \quad n\geq 2.
\end{equation}
\end{proof}

We now consider the end interval $[0,a]$.
\begin{lemma}\label{soe4}
  Let $s_1,\cdots, s_n$ and
  $w_1,\cdots,w_n$ ($n\geq 2$) be the nodes and weights for $n$-point Gauss-Jacobi
  quadrature with the weight function $s^{\beta-1}$
  on the interval. Then for $0<t<T$, $\beta\in (1,2)$ and $n>1$,
  \begin{equation}\label{2.1.14}
\left|\int_0^a e^{-ts}s^{\beta-1}ds-\sum_{k=1}^n w_ke^{-s_k t}\right|
    <2\sqrt{\pi}a^\beta n^{3/2} \left(\frac{e}{8}\right)^{2n}
 \left(\frac{aT}{n}\right)^{2n}.
 \end{equation}

\end{lemma}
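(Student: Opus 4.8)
The plan is to mimic the structure of Lemma~\ref{soe3}, while exploiting the fact that the singular factor $s^{\beta-1}$ has now been absorbed into the quadrature weight. Consequently the function to which the quadrature is genuinely applied is the smooth exponential $f(s)=e^{-ts}$, and I will not need the Leibniz-rule computation of \eqref{2.1.11}. The starting point is the standard error representation for Gaussian quadrature with respect to a weight $w(s)$: if $\pi_n$ denotes the monic polynomial of degree $n$ orthogonal to all lower-degree polynomials under the inner product $\int_0^a f g\,w\,ds$, then there is a point $\xi\in(0,a)$ with
\begin{equation*}
\int_0^a f(s)\,w(s)\,ds-\sum_{k=1}^n w_k f(s_k)=\frac{f^{(2n)}(\xi)}{(2n)!}\int_0^a \pi_n^2(s)\,w(s)\,ds.
\end{equation*}
Taking $w(s)=s^{\beta-1}$ and $f(s)=e^{-ts}$, the derivative factor is $f^{(2n)}(\xi)=t^{2n}e^{-t\xi}$, which is bounded in absolute value by $T^{2n}$ since $0<t<T$ and $\xi>0$. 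Thus the whole problem reduces to estimating the squared norm $\gamma_n:=\int_0^a \pi_n^2(s)\,s^{\beta-1}\,ds$.

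To estimate $\gamma_n$ I would first rescale to the reference interval $[0,1]$ via $s=a\sigma$; a short orthogonality computation shows $\gamma_n=a^{2n+\beta}\tilde\gamma_n$, where $\tilde\gamma_n=\int_0^1\tilde\pi_n^2(\sigma)\sigma^{\beta-1}d\sigma$ and $\tilde\pi_n$ is the monic orthogonal polynomial on $[0,1]$ for the weight $\sigma^{\beta-1}$. The key step is the extremal property of orthogonal polynomials: $\tilde\pi_n$ minimizes the weighted $L^2$ norm among all monic polynomials of degree $n$. Hence I may replace $\tilde\pi_n$ by any convenient monic competitor, the natural choice being the shifted monic Legendre polynomial $\tilde P_n$ on $[0,1]$, whose squared $L^2$ norm is known explicitly to equal $\frac{(n!)^4}{((2n)!)^2(2n+1)}$. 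Using in addition that $\sigma^{\beta-1}\le 1$ on $[0,1]$ when $\beta>1$, this yields the clean bound
\begin{equation*}
\tilde\gamma_n\le\int_0^1 \tilde P_n^2(\sigma)\,\sigma^{\beta-1}\,d\sigma\le\int_0^1\tilde P_n^2(\sigma)\,d\sigma=\frac{(n!)^4}{((2n)!)^2(2n+1)}.
\end{equation*}

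Combining the three ingredients gives a bound on the quadrature error of the form $\frac{a^{\beta}(aT)^{2n}(n!)^4}{((2n)!)^3(2n+1)}$, after which I would invoke exactly the Stirling estimate \eqref{2.1.8} already used in Lemma~\ref{soe3} to convert $(n!)^4/((2n)!)^3$ into $2\sqrt\pi\,(e/8)^{2n}\sqrt{n}/n^{2n}$. The stated inequality \eqref{2.1.14} then follows, in fact with room to spare, since the extra factor $1/(2n+1)$ produces decay that the claimed $n^{3/2}$ prefactor more than accommodates. I expect the only genuine obstacle to be the control of the monic-polynomial norm $\gamma_n$; the extremal-property comparison with the Legendre polynomial is what makes this tractable without having to compute Gauss--Jacobi normalization constants explicitly, and everything else is a rescaling together with the Stirling bound inherited from the previous lemma.
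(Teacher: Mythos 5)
Your proof is correct, and it departs from the paper's at exactly one point: the treatment of the quadrature constant. The paper quotes the standard Gauss--Jacobi error estimate with the exact norm constant,
\[
\left|\int_0^a e^{-ts}s^{\beta-1}ds-\sum_{k=1}^n w_ke^{-s_k t}\right|
\le \frac{a^{2n+\beta}}{2n+\beta}\,\frac{(n!)^2[\Gamma(n+\beta)]^2}{(2n)!\,[\Gamma(2n+\beta)]^2}\,
\max_{s\in(0,a)}\left|D^{2n}_s e^{-st}\right|,
\]
then bounds the Gamma factors crudely via $\Gamma(n+\beta)<(n+1)!$, $\Gamma(2n+\beta)>(2n)!$, $2n+\beta>2n+1$, and finishes with the same Stirling estimate \eqref{2.1.8} that you invoke. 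You instead start from the generic Gaussian error representation $\frac{f^{(2n)}(\xi)}{(2n)!}\gamma_n$ and control $\gamma_n$ by the extremal property of monic orthogonal polynomials, comparing against the shifted monic Legendre polynomial and using $\sigma^{\beta-1}\le 1$ on $[0,1]$, which is legitimate precisely because $\beta>1$; your normalization $\int_0^1\tilde P_n^2\,d\sigma=\frac{(n!)^4}{((2n)!)^2(2n+1)}$ and the rescaling $\gamma_n=a^{2n+\beta}\tilde\gamma_n$ both check out, as does the derivative bound $|f^{(2n)}(\xi)|=t^{2n}e^{-t\xi}\le T^{2n}$. What your route buys: it is self-contained (no tabulated Gauss--Jacobi constant needed) and quantitatively sharper --- you end with prefactor $\frac{2\sqrt{\pi}\,a^\beta\sqrt{n}}{2n+1}$ in place of the stated $2\sqrt{\pi}\,a^\beta n^{3/2}$, because the paper's bound $[\Gamma(n+\beta)]^2<[(n+1)!]^2$ gives away a factor $(n+1)^2$ that your Legendre comparison avoids. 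What the paper's route buys: it is a one-line citation once the handbook constant is accepted, with only elementary Gamma inequalities afterward. Both arguments reach \eqref{2.1.14}, yours with room to spare, exactly as you note.
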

\begin{proof}
The standard estimate for $n$-point Gauss-Jacobi
quadrature \cite{handbook} yields
\begin{equation}\label{2.1.15}
\left|\int_0^a e^{-ts}s^{\beta-1}ds-\sum_{k=1}^n w_ke^{-s_k t}\right|
\le \frac{a^{2n+\beta}}{2n+\beta}\frac{(n!)^2[\Gamma(n+\beta)]^2}
{(2n)![\Gamma(2n+\beta)]^2}
\max_{s\in (0,a)}\left|D^{2n}_s e^{-st}\right|.
\end{equation}
For $n>1$, we have $\Gamma(n+\beta)<\Gamma(n+2)=(n+1)!$,
$\Gamma(2n+\beta)>\Gamma(2n+1)=(2n)!$, $2n+\beta>2n+1$.
Thus,
\begin{equation}
\begin{aligned}\label{2.1.16}
\left|\int_0^a e^{-ts}s^{\beta-1}ds-\sum_{k=1}^n w_ke^{-s_k t}\right|
&\leq\frac{a^{2n+\beta}}{2n+1}\frac{(n!)^2[(n+1)!]^2}
          {[(2n)!]^3}t^{2n}e^{-st}\\
&\leq 2\sqrt{\pi}a^\beta n^{3/2} \left(\frac{e}{8}\right)^{2n}
 \left(\frac{aT}{n}\right)^{2n}.
\end{aligned}
\end{equation}
\end{proof}

We are now in a position to combine the last three lemmas to give an
efficient sum-of-exponentials approximation for $t^{-\beta}$
on $[\delta, T]$ for $\beta\in (1,2)$. The proof is straightforward.
\begin{theorem}\label{soethm}
Let $0<\delta\leq t \leq T$ ($\delta\leq 1$ and $T\geq 1$),
let $\varepsilon > 0$ be the desired
precision, let $n_{o}=O(\log\frac{1}{\varepsilon}) $,
let $M=O(\log T)$,
and let $N=O(\log\log\frac{1}{\varepsilon}+\log\frac{1}{\delta})$.
Furthermore, let ${s_{o,1}, \dots, s_{o,n_{o}}}$ and
${w_{o,1}, \dots, w_{o,n_{o}}}$
be the nodes and weights for the $n_{o}$-point Gauss-Jacobi quadrature
on the interval $[0,2^{M}]$, let ${s_{j,1}, \dots, s_{j,n_s}}$ and
${w_{j,1}, \dots, w_{j,n_s}}$
be the nodes and weights for $n_s$-point Gauss-Legendre quadrature
on small intervals $[2^j,2^{j+1}]$, $j=M,\cdots,-1$, where
$n_s= O\left(\log\frac{1}{\varepsilon}\right)$,
and let ${s_{j,1}, \dots, s_{j,n_l}}$ and
${w_{j,1}, \dots, w_{j,n_sl}}$
be the nodes and weights for $n_l$-point Gauss-Legendre quadrature
on large intervals $[2^j,2^{j+1}]$, $j=0,\cdots,N$, where
$n_l= O\left(\log\frac{1}{\varepsilon}+\log\frac{1}{\delta}\right)$.
Then for $t\in [\delta,T]$ and $\beta\in (1,2)$,
\begin{equation}
  \left|\frac{\Gamma(\beta)}{t^\beta} -
  \left(\sum_{k=1}^{n_{o}} e^{-s_{o,k}t}w_{o,k}
  +\sum_{j=M}^{-1}\sum_{k=1}^{n_s} e^{-s_{j,k}t}s_{j,k}^{\beta-1}w_{j,k}
  +\sum_{j=0}^{N}\sum_{k=1}^{n_l} e^{-s_{j,k}t}s_{j,k}^{\beta-1}w_{j,k}
  \right)
  \right|
\leq \varepsilon.
\end{equation}
\end{theorem}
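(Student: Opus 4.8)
The plan is to treat the integral representation \eqref{2.1.1}, written as $\Gamma(\beta)/t^\beta=\int_0^\infty e^{-ts}s^{\beta-1}\,ds$, as the exact quantity to be discretized, and to build the quadrature by cutting the semi-infinite $s$-axis into the four groups of pieces that exactly match the three preceding lemmas. Setting the truncation point $p=2^{N+1}$ and taking the left endpoint of the first dyadic interval to be $2^M\approx 1/T$ (so that $M=-O(\log T)$ is negative, i.e. the ``small'' end of the spectrum), I would split
\[
\int_0^\infty e^{-ts}s^{\beta-1}\,ds
=\int_0^{2^M}(\cdots)
+\sum_{j=M}^{-1}\int_{2^j}^{2^{j+1}}(\cdots)
+\sum_{j=0}^{N}\int_{2^j}^{2^{j+1}}(\cdots)
+\int_{2^{N+1}}^{\infty}(\cdots),
\]
where $(\cdots)$ abbreviates $e^{-ts}s^{\beta-1}\,ds$. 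Replacing each of the first three groups by its quadrature sum --- Gauss--Jacobi with weight $s^{\beta-1}$ on the end interval, Gauss--Legendre on the dyadic intervals --- produces exactly the approximant in the statement, the tail contributing no nodes.

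Next I would bound the four error contributions separately, reusing the lemmas essentially verbatim. The tail $\int_{2^{N+1}}^\infty$ is controlled by Lemma~\ref{soe2} with $t\ge\delta$ and $p=2^{N+1}$; by the Remark following that lemma, choosing $2^{N+1}=O\!\big(\tfrac1\delta\log\tfrac1{\varepsilon\delta}\big)$, that is $N=O(\log\tfrac1\delta+\log\log\tfrac1\varepsilon)$, drives this piece below $\varepsilon$. The end interval $[0,2^M]$ is handled by the Gauss--Jacobi bound \eqref{2.1.14} of Lemma~\ref{soe4} with $a=2^M$: since $aT\approx 1$, the factor $(aT/n_o)^{2n_o}$ decays super-geometrically and $n_o=O(\log\tfrac1\varepsilon)$ suffices. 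Every dyadic interval $[2^j,2^{j+1}]$ is handled by the Gauss--Legendre bound \eqref{2.1.5} of Lemma~\ref{soe3}, whose per-interval error is $2^{\beta-3/2}\pi\,2^{j\beta}(e^{1/e}/4)^{2n}$ with ratio $e^{1/e}/4<1$.

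Finally I would sum the per-interval errors and balance the node counts. On the small intervals $a=2^j\le\tfrac12$, so $2^{j\beta}\le1$ and the $|M|=O(\log T)$ terms sum to at most $O(\log T)(e^{1/e}/4)^{2n_s}$; taking $n_s=O(\log\tfrac1\varepsilon)$, which absorbs the harmless $\log\log T$, keeps this below $\varepsilon$. The step that genuinely requires care is the large intervals: there $a=2^j$ grows up to $2^N\approx\tfrac1\delta\log\tfrac1{\varepsilon\delta}$, so the prefactor $2^{j\beta}$ is no longer bounded and the geometric sum $\sum_{j=0}^{N}2^{j\beta}$ is dominated by its last term $2^{(N+1)\beta}$. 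To defeat this growth one must let $n_l$ scale with $N$: imposing $2^{(N+1)\beta}(e^{1/e}/4)^{2n_l}\le\varepsilon$ forces $n_l=O(N+\log\tfrac1\varepsilon)=O(\log\tfrac1\varepsilon+\log\tfrac1\delta)$, precisely the stated order. Arranging each of the four contributions to be at most $\varepsilon/4$ then yields the bound $\varepsilon$. The only real obstacle is thus this coupling between the exponential growth of the dyadic prefactor $2^{j\beta}$ on the large intervals and the number of Gauss--Legendre nodes $n_l$ needed to overcome it; the remainder is a direct transcription of Lemmas~\ref{soe2}--\ref{soe4} followed by a geometric summation.
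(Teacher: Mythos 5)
Your proposal is correct and is precisely the argument the paper intends: the paper itself gives no details beyond declaring the proof a ``straightforward'' combination of Lemmas \ref{soe2}, \ref{soe3}, and \ref{soe4}, and your four-way decomposition (Gauss--Jacobi end interval $[0,2^M]$ with $2^M\approx 1/T$, small and large dyadic Gauss--Legendre intervals, truncated tail) with the parameter balancing $N=O(\log\log\frac{1}{\varepsilon}+\log\frac{1}{\delta})$ and $n_l=O(\log\frac{1}{\varepsilon}+\log\frac{1}{\delta})$ to defeat the $2^{j\beta}$ prefactor is exactly that combination. You also correctly resolved the one notational trap, namely that $M$ must be read as a negative index of magnitude $O(\log T)$ so that the factor $(2^MT/n_o)^{2n_o}$ in Lemma \ref{soe4} decays.
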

\begin{remark}
The important fact which emerges from this theorem is that the total
number of exponentials needed to approximate
$t^{-\beta}$ for $0<\dt\leq t\leq T$ with an absolute
error $\varepsilon$ is given by the formula \eqref{nexp}.
\end{remark}
\begin{remark}
Efficient sum-of-exponentials approximation for the power function
$t^{-\beta}$ ($\beta>0$) has been studied in detail both anaytically
and algorithmically in a sequence of papers \cite{beylkin1,beylkin2,beylkin3}.
In \cite{beylkin3}, it has been shown that for any $\beta>0$ the power function
$t^{-\beta}$ admits an efficient sum-of-exponentials approximation on
the interval $[\delta, 1]$ with a {\it relative} error $\varepsilon$, and
the number of terms needed is $p=O\left(\log\frac{1}{\varepsilon}
\left(\log\frac{1}{\varepsilon}+\log\frac{1}{\delta}\right)\right)$.
The proof in \cite{beylkin3} is constructive and relies on the truncated
trapezoidal rule to discretize an integral from $-\infty$ to $\infty$. Along
the lines of \cite{beylkin3}, it is straightforward to show that the number
of exponentials needed will be
$O\left(\left(\log\frac{1}{\varepsilon}+\log\frac{T}{\delta}\right)^2\right)$
if one wants to bound the {\it absolute} error on the interval $[\delta, T]$.

In \cite{Li10SISC}, it has been shown that for any $0<\beta<1$ the power function
$t^{-\beta}$ admits an efficient sum-of-exponentials approximation on
the interval $[\delta, \infty]$ with an {\it absolute} error $\varepsilon$, and
the number of terms needed is
$O\left(\left(\log\frac{1}{\varepsilon}+\log\frac{1}{\delta}\right)^2\right)$.
The proof in \cite{Li10SISC} is also constructive, although it relies on the
adaptive Gaussian quadrature and utilizes asymptotic error formula for the Gauss
quadrature.

The important difference between our result \eqref{nexp} and the above theoretical
results is that there is only $O\left(\log\frac{1}{\varepsilon}
\log\log\frac{1}{\varepsilon}\right)$ term in \eqref{nexp}, while
$O\left(\log^2\frac{1}{\varepsilon}\right)$ term appears in both \cite{beylkin3}
and \cite{Li10SISC}.
\end{remark}
\begin{remark}\label{reduction}
The resulting number of exponentials following the construction in Theorem
\ref{soethm} is unnecessarily large. One may apply modified Prony's method
in \cite{beylkin2} to reduce the number of exponentials for nodes on the
interval $(0,1)$, while standard model reduction method in \cite{jiang6} can
be applied to reduce the number of
exponentials for nodes on the interval $[1,p]$.
\end{remark}\\
Table \ref{sumnum} lists the actual number of exponentials needed to
approximate $t^{-1-\alpha}$ with various precisions $\varepsilon$
and $N_T=T\slash \Delta t$ after applying the reduction algorithms in Remark
\ref{reduction}. We observe that the number of exponentials needed
is very modest even for high accuracy approximations. Indeed, one needs less than
$80$ terms in order to march one million steps with $9$-digit accuracy.
\begin{table}[!ht]
\centering
\caption{Number of exponentials needed to approximate $t^{-1-\alpha}$.
The second row indicates the ratio $T\slash \Delta t$ with fixed $\Delta t=10^{-3}$.
The first column stands for the absolute error $\varepsilon$.}  \label{sumnum}
\begin{adjustwidth}{1.8cm}{0cm}
\resizebox{!}{1.4cm}
{
\begin{tabular}{lccccccccccc} \hline \multicolumn{1}{c}{}&\multicolumn{1}{c}{\multirow{2}{0.2cm}{}} &\multicolumn{4}{c}{$\alpha=0.2$} &\multicolumn{1}{c}{\multirow{2}{0.2cm}{}}  &\multicolumn{4}{c}{$\alpha=0.5$} \\
\cline{1-1}\cline{3-6}  \cline{8-11} \multicolumn{1}{c}{{$\varepsilon$}$\bigg\backslash${$\frac{T}{\Delta t}$}}&\multicolumn{1}{c}{} &\multicolumn{1}{c}{}  {$10^3$} &{$10^4$}&{$10^5$}&{$10^6$} &\multicolumn{1}{c}{}  & {$10^3$} &{$10^4$}&{$10^5$}&{$10^6$} \\ \hline
    $10^{-3}$  &&27&31&35&38&      &28&30&35&38&\\
    $10^{-6}$  &&37&42&47&52&      &42&47&47&51&\\
   $10^{-9}$   &&47&54&59&64&      &49&55&64&72&\\
  \hline
\end{tabular}
}
\end{adjustwidth}
\end{table}

\subsection{Fast Evaluation of the History Part}
We replace the convolution kernel $t^{-1-\alpha}$ by
its sum-of-exponentials approxiamtion in \eqref{2.2.1} to
approximate the history part defined in \eqref{2.0.3} as follows:
\begin{align}
C_h(t_n)& \approx  \frac{1}{\Gamma(1-\alpha)} \left[\frac{u(t_{n-1})}{\Delta t^\alpha} - \frac{u(t_0)}{t_{n}^\alpha} - \alpha \sum_{i=1}^{\nexp} \omega_i \int_0^{t_{n-1}}e^{-(t_n-\tau)s_i} u(\tau)d\tau \right] \nonumber\\
& =  \frac{1}{\Gamma(1-\alpha)} \left[\frac{u(t_{n-1})}{\Delta t^\alpha} - \frac{u(t_0)}{t_{n}^\alpha} - \alpha
  \sum_{i=1}^{\nexp} \omega_i U_{\text{hist,i}}(t_{n})  \right].
\end{align}
To evaluate $U_{\text{hist,i}}(t_n) $ for $n=1,2,\cdots,N_T$, we observe
the following simple recurrence relation:
\begin{equation}\label{eq3.3}
U_{\text{hist,i}}(t_n) = e^{-s_{i}\Delta t}U_{\text{hist,i}}(t_{n-1}) +
\int_{t_{n-2}}^{t_{n-1}}e^{-s_{i}(t_n-\tau)}
u(\tau)d\tau.
\end{equation}
At each time step, we only need $O(1)$ work to compute $U_{\text{hist,i}}(t_n) $ since $U_{\text{hist,i}}(t_{n-1}) $ is known at that point. Thus, the
total work is reduced from $O(N_T^2)$ to $O(N_T\nexp)$, and the total
memory requirement is reduced from $O(N_T)$ to $O(\nexp)$.

One may compute the integral on the right hand side of \eqref{eq3.3} by interpolating $u$ via a linear
function and then evaluating the resulting approximation analytically. We have
\begin{equation}\label{eq3.4}
\begin{aligned}
\int_{t_{n-2}}^{t_{n-1}}e^{-s_{i}(t_n-\tau)}
u(\tau)d\tau&\approx\frac{e^{-s_i \Delta t}}{s_i^2\Delta t}
\left[(e^{-s_i \Delta t}-1+s_i\Delta t)u^{n-1}\right.\\
&\quad+\left.(1-e^{-s_i \Delta t}-e^{-s_i \Delta t}s_i\Delta t)u^{n-2}\right].
\end{aligned}
\end{equation}
We note that the weights in front of $u^{n-1}$ and $u^{n-2}$ in \eqref{eq3.4}
are subject to significant cancellation error when $s_i\Delta t$ is small. In that
case, we can compute the weights by a Taylor expansion of exponentials
with a small number of terms.

\begin{remark}
Another popular fast method for computing the
convolution with exponential functions is to solve the equivalent initial value
problem for an ordinary differential equation. We would like to point
out that in our case this may force one to choose a very small time step
$\Delta t$ for the overall scheme. This is because
$s_i$ ($i=1,2,\cdots,\nexp$) usually varies in orders of different
magnitudes and the resulting ODE system will be very stiff.
Thus we prefer to evaluate
the convolution via the simple recurrence relation \eqref{eq3.3}.
\end{remark}

\subsection{Error Analysis}

It is straightforward to verify that our scheme of evaluating the Caputo
fractional derivative is equivalent to the following formula

\begin{align}
  {^{C}_{0}}\!{D}_t^{\alpha}u^{n} & \approx
  \frac{u(t_n) - u(t_{n-1})}{(1-\alpha)\Delta t^\alpha \Gamma(1-\alpha)}+
  \frac{1}{\Gamma(1-\alpha)} \left[\frac{u(t_{n-1})}{\Delta t^\alpha}
    - \frac{u(t_0)}{t_{n}^\alpha} - \alpha \sum_{i=1}^{\nexp}
    \omega_i U_{\text{hist,i}}(t_n)  \right]\nonumber\\
  &=\frac{\Delta t^{-\alpha}}{\Gamma(1-\alpha)}\left(\frac{u^n}{1-\alpha}
  -(\frac{\alpha}{1-\alpha}+a_0)u^{n-1}\right.\nonumber\\
  &\left.-\sum_{l=1}^{n-2}(a_{n-l-1}+b_{n-l-2})u^l
  -(b_{n-2}+\frac{1}{n^{\alpha}})u^0\right)\nonumber\\
  &\triangleq {^{FC}_{0}}\mathbb{D}_t^{\alpha}u^{n},  \quad \text{for} \quad n>2,
  \label{defF}
\end{align}
where
\begin{align*}
&a_n=\alpha\Delta t^{\alpha}\sum_{j=1}^{\nexp}\omega_j e^{-ns_j\Delta t}\lambda_{j}^1,\quad &&b_n=\alpha\Delta t^{\alpha}\sum_{j=1}^{\nexp}\omega_j e^{-ns_j\Delta t}\lambda_{j}^2,\\
&\lambda_{j}^1=\frac{e^{-s_j \Delta t}}{s_j^2\Delta t}(e^{-s_j \Delta t}-1+s_j\Delta t), \quad &&\lambda_{j}^2=\frac{e^{-s_j \Delta t}}{s_j^2\Delta t}(1-e^{-s_j \Delta t}-e^{-s_j \Delta t}s_j\Delta t).
\end{align*}
Noting that $U_{\text{hist,i}}(t_1) = 0$ when $n=1$, we have
\begin{align} \label{defF1}
{^{FC}_{0}}\mathbb{D}_t^{\alpha}u^{1}&=\frac{\Delta t^{-\alpha}}{\Gamma(2-\alpha)}(u^1-u^{0}).
\end{align}
Recall that the L1-approximation (based on the linear interpolation of
the density function) of the Caputo derivative
${_{0}^{C}\!}D_t^{\alpha}u$ (see, for example, \cite{Lin:07,OS74})
is defined by the formula
\begin{align}
{^{C}_{0}}\!\mathbb{D}_t^{\alpha}u^{n}=\frac{ \dt^{-\alpha}}{\Gamma(2-\alpha)}\bigg[
a^{(\alpha)}_0u^{n}-\sum^{n-1}_{k=1}(a^{(\alpha)}_{n-k-1}-a^{(\alpha)}_{n-k})u^k-a^{(\alpha)}_{n-1}u^0\bigg],\label{Ddiscaputo}
\end{align}
where $a^{(\alpha)}_k:=(k+1)^{1-\alpha}-k^{1-\alpha}.$
The following lemma, which can be found in \cite{Sun:06}, established
an error bound for the L1-approximation \eqref{Ddiscaputo}.
\begin{lemma} [see \cite{Sun:06}]\label{lem4} Suppose that $f(t)\in C^2[0,t_n]$ and let
\begin{equation}\label{discaputo0}
R^{n}u:={_{0}^{C}\!}D_t^{\alpha}u(t)\big|_{t=t_{n}} - {^{C}_{0}} \!\mathbb{D}_t^{\alpha}u^{n},
\end{equation}
where $0<\alpha <1$. Then
\begin{equation}\label{Terror0}
 |R^n u|\leq\frac{\dt^{2-\alpha}}{\Gamma(2-\alpha)}\bigg(\frac{1-\alpha}{12}+\frac{2^{2-\alpha}}{2-\alpha}-(1+2^{-\alpha})\bigg)\max_{0\leq t \leq t_n}|u^{''}(t)|.
 \end{equation}
\end{lemma}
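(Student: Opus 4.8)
The plan is to recognize the L1 formula \eqref{Ddiscaputo} as the \emph{exact} fractional integral of the piecewise-linear interpolant of $u$, and then bound the resulting interpolation error against the weakly singular kernel. First I would verify that ${^{C}_{0}}\!\mathbb{D}_t^{\alpha}u^{n}$ coincides with $\frac{1}{\Gamma(1-\alpha)}\int_0^{t_n}(t_n-s)^{-\alpha}(\Pi_1 u)'(s)\,ds$, where $\Pi_1 u$ is the continuous piecewise-linear interpolant of $u$ on the grid $\{t_k\}$. This amounts to checking the elementary identity $\int_{t_k}^{t_{k+1}}(t_n-s)^{-\alpha}\,ds=\frac{\dt^{1-\alpha}}{1-\alpha}a^{(\alpha)}_{n-k-1}$ and confirming, after one summation by parts, that the coefficient pattern $a^{(\alpha)}_0$, $a^{(\alpha)}_{n-k-1}-a^{(\alpha)}_{n-k}$, $a^{(\alpha)}_{n-1}$ of \eqref{Ddiscaputo} is reproduced. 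Consequently $R^n u=\frac{1}{\Gamma(1-\alpha)}\int_0^{t_n}(t_n-s)^{-\alpha}\bigl(u'(s)-(\Pi_1 u)'(s)\bigr)\,ds$.

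The key step is to integrate by parts on each subinterval. Since $u-\Pi_1 u$ vanishes at every node, the boundary terms cancel and I obtain
\begin{equation*}
R^n u=-\frac{\alpha}{\Gamma(1-\alpha)}\int_0^{t_n}\frac{(u-\Pi_1 u)(s)}{(t_n-s)^{1+\alpha}}\,ds,
\end{equation*}
which is integrable at $s=t_n$ because the interpolation error vanishes there like $(t_n-s)$. Inserting the standard remainder $|(u-\Pi_1 u)(s)|\le\frac12\max_{[0,t_n]}|u''|\,(s-t_k)(t_{k+1}-s)$ on each $[t_k,t_{k+1}]$ reduces the bound for $|R^n u|$ to $\frac{\alpha}{2\Gamma(1-\alpha)}\max|u''|$ times a sum of the weighted bump integrals $\int_{t_k}^{t_{k+1}}(t_n-s)^{-1-\alpha}(s-t_k)(t_{k+1}-s)\,ds$.

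Next I would split this sum into a history part over $[0,t_{n-2}]$ and a local part over the last two subintervals $[t_{n-2},t_n]$ that carry the singularity. On each history interval the kernel is monotone, so I bound it by its right-endpoint value $((n-k-1)\dt)^{-1-\alpha}$, use the exact bump mass $\int_{t_k}^{t_{k+1}}(s-t_k)(t_{k+1}-s)\,ds=\dt^3/6$, and finally compare $\sum_{j\ge 2}j^{-1-\alpha}$ with $\int_1^\infty x^{-1-\alpha}\,dx=1/\alpha$; using $(1-\alpha)\Gamma(1-\alpha)=\Gamma(2-\alpha)$, this produces \emph{exactly} the $\frac{1-\alpha}{12}$ contribution. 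On the local part the substitution $r=t_n-s\in[0,2\dt]$ turns the estimate into explicit integrals of $r^{1-\alpha}$, $r^{-\alpha}$, and $r^{-1-\alpha}$ over $[0,2\dt]$, which is precisely where the terms $\frac{2^{2-\alpha}}{2-\alpha}$ and $1+2^{-\alpha}$ originate (the exponent $2-\alpha$ reflecting the length-$2\dt$ local window).

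The main obstacle is the local part: because $(t_n-s)^{-1-\alpha}$ is genuinely singular there, the kernel cannot be pulled out, and a careless estimate either diverges or yields a Riemann-zeta–type sum $\zeta(1+\alpha)$ rather than a closed form. The delicate work is to integrate the singular kernel against the interpolation remainder exactly over the last two intervals and then collapse the resulting combination of $\frac{1}{1-\alpha}$, $\frac{1}{2-\alpha}$, and $\frac{1}{\alpha}$ terms through a nontrivial algebraic identity into $\frac{2^{2-\alpha}}{2-\alpha}-(1+2^{-\alpha})$; recombined with the history bound this gives the stated constant $\frac{1-\alpha}{12}+\frac{2^{2-\alpha}}{2-\alpha}-(1+2^{-\alpha})$. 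Everything else is routine, and since $u\in C^2[0,t_n]$ all remainders and integrals are finite, with uniformity in $n$ guaranteed by the convergence of $\sum_j j^{-1-\alpha}$.
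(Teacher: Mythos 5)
Your proposal is correct, and since the paper states Lemma \ref{lem4} without proof (it is quoted from \cite{Sun:06}), the right comparison is with that reference, whose standard argument you have reproduced essentially verbatim: recognize the L1 sum as the exact Caputo derivative of the piecewise-linear interpolant, integrate by parts (boundary terms vanish at the nodes, and the endpoint singularity is harmless because the error vanishes linearly at $t_n$), bound the history intervals via the bump mass $\Delta t^3/6$ and the right-endpoint kernel value with $\sum_{j\geq 2} j^{-1-\alpha}\leq\int_1^{\infty}x^{-1-\alpha}\,dx=1/\alpha$, and evaluate the last two singular integrals in closed form. The bookkeeping indeed closes: the history contribution $\frac{\alpha}{2\Gamma(1-\alpha)}\cdot\frac{\Delta t^3}{6}\cdot\frac{\Delta t^{-1-\alpha}}{\alpha}=\frac{(1-\alpha)\Delta t^{2-\alpha}}{12\,\Gamma(2-\alpha)}$ gives exactly the $\frac{1-\alpha}{12}$ term, and the exact local integrals $\frac{\Delta t^{2-\alpha}}{(1-\alpha)(2-\alpha)}$ and $\Delta t^{2-\alpha}\bigl[\frac{3(2^{1-\alpha}-1)}{1-\alpha}-\frac{2^{2-\alpha}-1}{2-\alpha}+\frac{2(2^{-\alpha}-1)}{\alpha}\bigr]$, multiplied by $\frac{\alpha(1-\alpha)}{2\Gamma(2-\alpha)}$, collapse to $\frac{2^{2-\alpha}}{2-\alpha}-(1+2^{-\alpha})$, so the stated constant is recovered exactly with no gaps.
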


The following lemma provides an error bound for our approximation, denoted by
${^{FC}_{0}} \mathbb{D}_t^{\alpha}u^{n}$ in
\eqref{defF} and \eqref{defF1}.

\begin{lemma}\label{lemma6}Suppose that $u(t)\in C^2[0,t_n]$ and let
\begin{equation}\label{discaputo}
  {^{F}_{}}\!R^{n}u:={_{0}^{C}\!}D_t^{\alpha}u(t)\big|_{t=t_{n}} -
  {^{FC}_{0}} \mathbb{D}_t^{\alpha}u^{n},
\end{equation}
where $0<\alpha <1$. Then
\begin{equation}\label{Terror}
 |{^{F}_{}}\!R^n u|\leq\frac{\dt^{2-\alpha}}{\Gamma(2-\alpha)}\bigg(\frac{1-\alpha}{12}+\frac{2^{2-\alpha}}{2-\alpha}-(1+2^{-\alpha})\bigg)\max_{0\leq t \leq t_n}|u^{''}(t)|+\frac{\alpha\varepsilon t_{n-1}}{\Gamma(1-\alpha)}\max_{0\leq t \leq t_{n-1}}\left|u(t)\right|.
 \end{equation}
\end{lemma}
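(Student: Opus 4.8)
The plan is to split the total error ${^{F}_{}}\!R^{n}u$ into the pure L1 discretization error, which is already controlled by Lemma~\ref{lem4}, and the extra error caused solely by replacing the exact kernel $t^{-1-\alpha}$ by its sum-of-exponentials surrogate from \eqref{2.2.1}. Writing $\tilde u$ for the continuous piecewise-linear interpolant of $u$ on $\Omega_t$, I first insert and subtract the standard L1 operator:
\begin{equation*}
{^{F}_{}}\!R^{n}u = \Big({_{0}^{C}\!}D_t^{\alpha}u(t)\big|_{t=t_{n}} - {^{C}_{0}}\!\mathbb{D}_t^{\alpha}u^{n}\Big) + \Big({^{C}_{0}}\!\mathbb{D}_t^{\alpha}u^{n} - {^{FC}_{0}}\mathbb{D}_t^{\alpha}u^{n}\Big).
\end{equation*}
The first bracket is exactly $R^n u$, so Lemma~\ref{lem4} bounds it by the first term on the right-hand side of \eqref{Terror}.

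The heart of the argument is to identify the second bracket. Here I would verify the algebraic identity that the fast scheme with the \emph{exact} kernel reproduces the L1 approximation \eqref{Ddiscaputo}. The local parts coincide trivially, since both schemes use the linear interpolant on $[t_{n-1},t_n]$. For the history parts I would integrate by parts the L1 convolution $\int_0^{t_{n-1}} \tilde u'(s)(t_n-s)^{-\alpha}\,ds$: because $\tilde u'$ is the piecewise constant $(u^k-u^{k-1})/\dt$, this integral is precisely the L1 history contribution, and integration by parts converts it exactly into the boundary terms $u^{n-1}\dt^{-\alpha}-u^0 t_n^{-\alpha}$ together with $-\alpha\int_0^{t_{n-1}}\tilde u(s)(t_n-s)^{-1-\alpha}\,ds$, which is exactly the exact-kernel form feeding \eqref{defF}. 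Since the analytic quadrature \eqref{eq3.4}, accumulated through the recurrence, computes $\int_0^{t_{n-1}}\tilde u(s)\sum_i\omega_i e^{-(t_n-s)s_i}\,ds$ with no further error, the two schemes differ only through the kernel, giving
\begin{equation*}
{^{C}_{0}}\!\mathbb{D}_t^{\alpha}u^{n} - {^{FC}_{0}}\mathbb{D}_t^{\alpha}u^{n} = -\frac{\alpha}{\Gamma(1-\alpha)}\int_0^{t_{n-1}} \tilde u(s)\left(\frac{1}{(t_n-s)^{1+\alpha}}-\sum_{i=1}^{\nexp}\omega_i e^{-(t_n-s)s_i}\right)ds.
\end{equation*}

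To close the estimate I would note that for $s\in[0,t_{n-1}]$ the lag satisfies $t_n-s\in[\dt,t_n]\subseteq[\dt,T]$, so the sum-of-exponentials bound \eqref{2.2.1} gives a pointwise error of at most $\varepsilon$ over the entire integration range. Since the piecewise-linear interpolant is a convex combination of nodal values, $|\tilde u(s)|\le \max_{0\le t\le t_{n-1}}|u(t)|$, and the interval $[0,t_{n-1}]$ has length $t_{n-1}$; hence the second bracket is bounded by $\frac{\alpha\varepsilon t_{n-1}}{\Gamma(1-\alpha)}\max_{0\le t\le t_{n-1}}|u(t)|$, which is the second term of \eqref{Terror}. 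Adding the two contributions by the triangle inequality yields the claim. The cases $n=1,2$ follow directly, since \eqref{defF1} already coincides with the L1 operator and the history error vanishes with $t_{n-1}$.

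The step I expect to be the main obstacle is the algebraic identification of the exact-kernel fast scheme with the L1 approximation: one must check carefully that the integration-by-parts boundary terms together with the linear-interpolation quadrature \eqref{eq3.4}, after summation over all subintervals, reassemble exactly into \eqref{Ddiscaputo} with no residual quadrature error, so that the \emph{only} discrepancy between the two operators is the kernel replacement. Once this exact identity is established, the remaining estimates are immediate from \eqref{2.2.1} and the boundedness of the interpolant.
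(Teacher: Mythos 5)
Your proposal is correct and takes essentially the same route as the paper's proof: both decompose ${^{F}_{}}\!R^{n}u$ through the L1 operator via the triangle inequality, invoke Lemma~\ref{lem4} for the discretization part, and bound the difference ${^{C}_{0}}\!\mathbb{D}_t^{\alpha}u^{n}-{^{FC}_{0}}\mathbb{D}_t^{\alpha}u^{n}$ by the pointwise kernel error $\varepsilon$ from \eqref{2.2.1} integrated against the piecewise-linear interpolant, using $|\Pi_{1,l}u|\le\max_{0\le t\le t_{n-1}}|u(t)|$ over an interval of length $t_{n-1}$. The only difference is that you spell out, via integration by parts and the exactness of the quadrature \eqref{eq3.4} under the recurrence \eqref{eq3.3}, the identification of the exact-kernel fast scheme with the L1 approximation, a step the paper simply asserts as obvious.
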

\begin{proof}
Obviously the only difference between our approximation
${^{FC}_{0}}\mathbb{D}_t^{\alpha}u^{n}$
and the L1-approximation ${^{C}_{0}}\mathbb{D}_t^{\alpha}u^{n}$ is that
the convolution kernel admits an absolute error bounded by $\varepsilon$
in its sum-of-exponentials approximation \eqref{2.2.1}, namely,
\begin{align}
  \left|{^{FC}_{0}}\mathbb{D}_t^{\alpha}u^{n}-{^{C}_{0}}\mathbb{D}_t^{\alpha}u^{n}\right|
  \leq\frac{\alpha\varepsilon}{\Gamma(1-\alpha)}\sum_{l=1}^{n-1}\int_{t_{l-1}}^{t_l}|\Pi_{1,l}u(s)|\mathrm{d}s.
\end{align}
where $\Pi_{1,l}u(t)=u(t_{l-1})\frac{t_l-t}{\Delta t}+u(t_{l})\frac{t-t_{l-1}}{\Delta t}$. And the triangle inequality leads to
\begin{equation}
|{^{F}_{}}\!R^n u|\leq|{^{C}_{}}\!R^n u|+\frac{\alpha\varepsilon}{\Gamma(1-\alpha)}
\sum_{l=1}^{n-1}\int_{t_{l-1}}^{t_l}|\Pi_{1,l}u(s)|\mathrm{d}s,
\end{equation}
where
\begin{align}\label{estimate1}
  \sum_{l=1}^{n-1}\int_{t_{l-1}}^{t_l}|\Pi_{1,l}u(s)|\mathrm{d}s
  \leq\max_{0\leq t \leq t_{n-1}}\left|u(t)\right|t_{n-1}.
\end{align}
Combining Lemma \ref{lem4} and \eqref{estimate1}, we obtain Lemma \ref{lemma6}.
\end{proof}

We also have the following useful inequality. The proof is given in
Appendix A.
\begin{lemma}\label{lemma3}
For any mesh functions $g=\{g^k|0\leq k \leq N\}$ defined on $\Omega_t$, the following inequality holds:
\begin{align*}
\Delta t \sum_{k=1}^n({^{FC}_{0}}\mathbb{D}_t^{\alpha}g^{k})g^k&\geq\frac{t_n^{-\alpha}-2\alpha\varepsilon t_{n-1}}{2\Gamma(1-\alpha)}\Delta t\sum_{k=1}^n(g^k)^2-\frac{t_n^{1-\alpha}-\alpha(1-\alpha)\varepsilon t_{n-1}\Delta t}{\Gamma(2-\alpha)}(g^0)^2 .
\end{align*}
\end{lemma}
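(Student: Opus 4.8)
The plan is to compare the fast scheme ${^{FC}_{0}}\mathbb{D}_t^{\alpha}$ against the underlying $L1$ operator ${^{C}_{0}}\mathbb{D}_t^{\alpha}$, prove the coercivity estimate for the clean $L1$ part first, and then absorb the sum-of-exponentials error as a controlled perturbation. Writing ${^{FC}_{0}}\mathbb{D}_t^{\alpha}g^k = {^{C}_{0}}\mathbb{D}_t^{\alpha}g^k + \mathcal{E}^k$ with $\mathcal{E}^k := ({^{FC}_{0}}\mathbb{D}_t^{\alpha} - {^{C}_{0}}\mathbb{D}_t^{\alpha})g^k$, I would split the target sum as $\Delta t\sum_{k=1}^n g^k\,{^{FC}_{0}}\mathbb{D}_t^{\alpha}g^k = \Delta t\sum_{k=1}^n g^k\,{^{C}_{0}}\mathbb{D}_t^{\alpha}g^k + \Delta t\sum_{k=1}^n g^k\mathcal{E}^k$ and bound the two pieces separately.

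For the $L1$ piece the key tool is the discrete fractional chain-rule inequality $g^k\,{^{C}_{0}}\mathbb{D}_t^{\alpha}g^k \geq \tfrac12\,{^{C}_{0}}\mathbb{D}_t^{\alpha}(g^k)^2$, which I would derive from the positivity and monotone decay of the coefficients $a_j^{(\alpha)} = (j+1)^{1-\alpha} - j^{1-\alpha}$ by a completion-of-squares argument (the $n=1$ case already reduces to $\tfrac{\Delta t^{-\alpha}}{2\Gamma(2-\alpha)}(g^1-g^0)^2\geq 0$). Summing over $k$ and using the telescoping identity $\sum_{k=1}^n {^{C}_{0}}\mathbb{D}_t^{\alpha}v^k = \tfrac{\Delta t^{-\alpha}}{\Gamma(2-\alpha)}\bigl(\sum_{m=1}^n a_{n-m}^{(\alpha)}v^m - n^{1-\alpha}v^0\bigr)$ with $v^k=(g^k)^2$, I would invoke the monotonicity bound $a_{n-m}^{(\alpha)}\geq a_{n-1}^{(\alpha)}\geq(1-\alpha)n^{-\alpha}$ together with $\Gamma(2-\alpha)=(1-\alpha)\Gamma(1-\alpha)$ to convert the prefactors into $t_n^{-\alpha}/\Gamma(1-\alpha)$ and $t_n^{1-\alpha}/(\Delta t\,\Gamma(2-\alpha))$. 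This yields the $L1$ estimate $\Delta t\sum_k g^k\,{^{C}_{0}}\mathbb{D}_t^{\alpha}g^k \geq \tfrac{t_n^{-\alpha}}{2\Gamma(1-\alpha)}\Delta t\sum_k (g^k)^2 - \tfrac{t_n^{1-\alpha}}{2\Gamma(2-\alpha)}(g^0)^2$, i.e. the stated main term together with a factor-two of slack on the $(g^0)^2$ term.

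For the perturbation piece I would reuse the estimate from the proof of Lemma~\ref{lemma6}: since the kernel is approximated within $\varepsilon$, one has $|\mathcal{E}^k| \leq \tfrac{\alpha\varepsilon}{\Gamma(1-\alpha)}\sum_{l=1}^{k-1}\int_{t_{l-1}}^{t_l}|\Pi_{1,l}g(s)|\,ds$. Lower-bounding $\Delta t\sum_k g^k\mathcal{E}^k \geq -\Delta t\sum_k |g^k|\,|\mathcal{E}^k|$, writing $\Pi_{1,l}g$ as a convex combination of $g^{l-1}$ and $g^l$, and applying Young's inequality $|g^k||g^l|\leq\tfrac12((g^k)^2+(g^l)^2)$, the resulting double sum collapses: since each index pairs with at most $n-1$ companions, the $(n-1)\Delta t = t_{n-1}$ factor emerges and the contributions collect to $-\tfrac{\alpha\varepsilon t_{n-1}}{\Gamma(1-\alpha)}\Delta t\sum_k(g^k)^2$, while the $l=1$ endpoint feeds a small $(g^0)^2$ contribution of size $O(\alpha\varepsilon t_{n-1}\Delta t)$. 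Combining with the $L1$ bound and absorbing this stray $(g^0)^2$ term into the factor-two slack left over above reproduces exactly the stated coefficients $\tfrac{t_n^{-\alpha}-2\alpha\varepsilon t_{n-1}}{2\Gamma(1-\alpha)}$ and $\tfrac{t_n^{1-\alpha}-\alpha(1-\alpha)\varepsilon t_{n-1}\Delta t}{\Gamma(2-\alpha)}$.

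The main obstacle I anticipate is twofold. First, establishing the discrete chain-rule inequality rigorously for general $n$ (rather than only $n=1$) requires careful control of the cross terms and rests on the sign of the differences of the $a_j^{(\alpha)}$; this is the analytic heart of the argument. Second, the perturbation bookkeeping must be arranged so that the $\varepsilon$-dependent corrections land on precisely $2\alpha\varepsilon t_{n-1}$ and $\alpha(1-\alpha)\varepsilon t_{n-1}\Delta t$ — in particular, one must verify that the $(g^0)^2$ term generated by Young's inequality is genuinely dominated by the slack in the $L1$ estimate, so that the final $(g^0)^2$ coefficient can be written in the stated closed form.
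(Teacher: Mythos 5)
Your proposal is correct in substance but follows a genuinely different route from the paper. The paper's proof never separates off the L1 operator: it expands $({^{FC}_{0}}\mathbb{D}_t^{\alpha}g^{k})g^k$ directly in terms of the fast scheme's coefficients $a_l, b_l$ from \eqref{defF}, applies Young's inequality to every cross term (which silently uses $a_l, b_l\geq 0$, a consequence of $\omega_j>0$ and $\lambda_j^{1},\lambda_j^{2}\geq 0$), and then controls the resulting coefficients $C_k$ of $(g^k)^2$ through the exact identity $\sum_{l=0}^{k-2}(a_l+b_l)=\alpha\Delta t^{\alpha}\int_{\Delta t}^{k\Delta t}\sum_{j}\omega_j e^{-s_j t}\,\mathrm{d}t$ (equation \eqref{a5}), which converts the discrete coefficient sums into integrals of the sum-of-exponentials kernel that are then bounded via \eqref{2.2.1}. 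That telescoping identity is the analytic heart of the paper's argument, and it plays the combined role of your two separate ingredients: the Alikhanov-type chain rule $g^k\,{^{C}_{0}}\mathbb{D}_t^{\alpha}g^k\geq\tfrac12\,{^{C}_{0}}\mathbb{D}_t^{\alpha}(g^k)^2$ (which is indeed true for L1, by Abel summation against the decreasing positive weights $a^{(\alpha)}_j$, exactly as you indicate), plus the kernel-perturbation bound recycled from the proof of Lemma \ref{lemma6}. Your route buys modularity — the clean L1 coercivity and the $\varepsilon$-perturbation are decoupled, and each piece is standard — while the paper's monolithic computation tracks the $\varepsilon$-dependence in one pass and needs no absorption step; your telescoping identity for $\sum_{k=1}^n{^{C}_{0}}\mathbb{D}_t^{\alpha}v^k$ and the bound $a^{(\alpha)}_{n-m}\geq a^{(\alpha)}_{n-1}\geq(1-\alpha)n^{-\alpha}$ are both correct and deliver the stated main term with the factor-two slack on $(g^0)^2$ that you claim.

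Two caveats. First, your absorption step is not unconditional: Young's inequality charges the perturbation with a $(g^0)^2$ term of size roughly $\tfrac{\alpha\varepsilon t_{n-1}\Delta t}{4\Gamma(1-\alpha)}$, and fitting the total under the stated coefficient $\tfrac{t_n^{1-\alpha}-\alpha(1-\alpha)\varepsilon t_{n-1}\Delta t}{\Gamma(2-\alpha)}$ requires a mild smallness condition of the form $\alpha(1-\alpha)\varepsilon t_{n-1}\Delta t\lesssim t_n^{1-\alpha}$. This is harmless in the only regime where the lemma has content (positivity of $t_n^{-\alpha}-2\alpha\varepsilon t_{n-1}$ already forces $\varepsilon$ to be comparably small), but it should be stated explicitly rather than asserted to come out ``exactly.'' Second — and this is in your favor — if one tracks the paper's own constants, the bound on $C_0$ in \eqref{a7} actually yields $t_n^{1-\alpha}+\alpha(1-\alpha)\varepsilon t_{n-1}\Delta t$ (a plus sign) in the $(g^0)^2$ coefficient, so the minus-signed form in the lemma statement does not literally follow from the paper's computation; your slack-absorption argument is one legitimate way to obtain the stronger minus-signed version, at the price of the smallness condition above.
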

%
%
\section{Application I: Linear Fractional Diffusion Equation}
Consider the following pure initial value problem of the linear fractional diffusion equation
\begin{align}
&_{0}^{C}\!D_{t}^{\alpha}u(x,t)= u_{xx}(x,t)+f(x,t),
&& x\in \mathbb{R}, \; t>0,\label{problemdifa}   \\
&u(x,0) =u_0(x),                  && x\in \mathbb{R}, \label{problemdifb} \\
&u(x,t) \rightarrow 0,                 &&    \text{when} \ |x|\rightarrow \infty,\label{problemdifc}
\end{align}
where the initial data $u_0$ and the source term $f(x,t)$ are assumed to be compactly supported in
the interval $\Omega_i :=  \{x| x_l<x<x_r\}$. To solve this problem
using a finite difference scheme, one needs to truncate the computational domain to a finite interval and impose some
boundary conditions at the end points, see \cite{Awotunde:15,Brunner:14,Brunner:15,Dea:13, Han:13, Gao12,Gao13,Ghaffari:13}. The exact nonreflecting boundary conditions for the above problem have
been derived in \cite{Gao12} via standard Laplace transform method and it is shown in \cite{Gao12} that
the above problem is equivalent to the following initial-boundary value problem
\begin{align}
&_{0}^{C}\!D_{t}^{\alpha}u(x,t)= u_{xx}(x,t)+f(x,t),   &x\in \Omega_i, \;t>0,   \label{exacta1}\\
&u(x,0)=u_0(x), & x\in\Omega_i, \label{exacta2}\\
&\frac{\partial u(x,t)}{\partial x}=\frac{1}{\Gamma(1-\frac{\alpha}{2})}\int_{0}^{t}\frac{u_s (x,s) }{(t-s)^{\frac{\alpha}{2}}}ds := {_{0}^{C}\!D}_{t}^{\frac{\alpha}{2}}u(x,t), & x=x_l,\\
&\frac{\partial u(x,t)}{\partial x}=-\frac{1}{\Gamma(1-\frac{\alpha}{2})}\int_{0}^{t}\frac{u_s (x,s) }{(t-s)^{\frac{\alpha}{2}}}d s := -{_{0}^{C}\!D}_{t}^{\frac{\alpha}{2}}u(x,t), &  x = x_r.\label{exacta3}
\end{align}
\subsection{Construction of the New Finite Difference Scheme}\label{NumericalScheme}
We now incorporate our fast evaluation scheme of the Caputo fractional derivative into the existing
finite difference scheme to construct a fast and stable FD scheme for solving the aforementioned IVP of the fractional
diffusion equation. We first introduce some standard notations.
For two given positive integers $N_T$ and $N_S$, let $\{t_n\}_{n=0}^{N_T}$
 be a equidistant partition of $[0,T]$ with
$t_n=n \dt$ and $ \dt=T/N_T$, and let $\{x_j\}_{j=0}^{N_S}$
 be a partition of $(x_l,x_r)$ with
$x_i=x_l+i h $ and $ h =(x_r-x_l)/N_S$. Denote
$u^n_i = u(x_i,t_n)$, $f_i^n = f(x_i,t_n)$, and
\begin{align*}
&\delta_{t}u^n_{i}=\frac{u_i^{n}-u^{n-1}_{i}}{ \dt},
&\delta_{x}u^n_{i+\frac{1}{2}}=\frac{u^n_{i+1}-u^n_{i}}{ h },\\
&\delta_x^{2}u^n_{i}=\frac{\delta_{x}u_{i+\frac{1}{2}}-\delta_{x}u_{i-\frac{1}{2}}}{ h },
&\tilde{\delta}_{x}u^n_{i}=\frac{u^n_{i+1}-u^n_{i-1}}{ 2h }.
\end{align*}

\begin{lemma}[see \cite{Sun:06}] \label{lem5} Suppose that $v\in C^3[x_l, x_r]$. Then
\begin{align}
&u_{xx}(x_0) -\frac{2}{h} \left[\delta_x u_{\frac{1}{2}}- u_x(x_0)\right] = -\frac{h}{3} u_{xxx}(x_0+\theta_1 h), \quad \theta_1 \in (0,1), \\
&u_{xx}(x_{N_S}) -\frac{2}{h} \left[u_x(x_{N_S})- \delta_x u_{{N_S}-
\frac{1}{2}} \right] = \frac{h}{3} u_{xxx}(x_{N_S} -\theta_2 h), \quad \theta_2 \in (0,1).
\end{align}
\end{lemma}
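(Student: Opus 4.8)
The plan is to establish both identities by a single tool—third-order Taylor expansion with the Lagrange form of the remainder—which is exactly where the $C^3$ hypothesis enters. No existing result from the excerpt is needed; this is a self-contained local truncation estimate.

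For the left endpoint I would start from the forward difference $\delta_x u_{\frac12} = (u(x_1)-u(x_0))/h$ and expand $u(x_1)=u(x_0+h)$ about $x_0$. Since $u\in C^3$, Taylor's theorem furnishes a point $x_0+\theta_1 h$ with $\theta_1\in(0,1)$ such that
\begin{equation*}
u(x_0+h) = u(x_0) + h\,u_x(x_0) + \frac{h^2}{2}u_{xx}(x_0) + \frac{h^3}{6}u_{xxx}(x_0+\theta_1 h).
\end{equation*}
Dividing by $h$ and subtracting $u_x(x_0)$ gives $\delta_x u_{\frac12}-u_x(x_0)=\frac{h}{2}u_{xx}(x_0)+\frac{h^2}{6}u_{xxx}(x_0+\theta_1 h)$; multiplying by $2/h$ and moving $u_{xx}(x_0)$ to the other side yields the first identity with remainder $-\frac{h}{3}u_{xxx}(x_0+\theta_1 h)$.

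For the right endpoint the argument is the mirror image: I would expand $u(x_{N_S-1})=u(x_{N_S}-h)$ about $x_{N_S}$, so that the odd-order terms acquire alternating signs and the cubic $C^3$ remainder lives at a point $x_{N_S}-\theta_2 h$ with $\theta_2\in(0,1)$. Rearranging in the same way produces the second identity, and the sign flip to $+\frac{h}{3}u_{xxx}(x_{N_S}-\theta_2 h)$ is precisely the effect of the cubic term whose increment $-h$ is now negative.

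There is no genuine obstacle here: the whole content is the third-order expansion, and the only points requiring care are bookkeeping—keeping the factor $2/h$ straight and confirming that each Lagrange intermediate point lies in $(x_0,x_0+h)$, respectively $(x_{N_S}-h,x_{N_S})$, which is what yields the stated ranges for $\theta_1$ and $\theta_2$. The $C^3$ regularity is exactly the minimal smoothness needed for the cubic remainder to be the leading error term.
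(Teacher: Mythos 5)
Your proof is correct: the paper itself gives no proof of this lemma, simply citing it from \cite{Sun:06}, and your third-order Taylor expansion with Lagrange remainder is exactly the standard derivation behind it --- the bookkeeping in both identities (the factor $2/h$, the sign flip at the right endpoint from the odd-order term, and the intermediate points lying in $(x_0,x_0+h)$ and $(x_{N_S}-h,x_{N_S})$) all checks out. The only blemish is cosmetic and inherited from the statement: the hypothesis is written as $v\in C^3[x_l,x_r]$ while the conclusion concerns $u$, a typo you silently (and reasonably) corrected.
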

The finite difference scheme in \cite{Gao12} for the problem \eqref{exacta1}-\eqref{exacta3} can be written
in the following form
\begin{align}
  &{^{C}_{0}}\!\mathbb{D}
  _t^{\alpha}u^{n}_{i}= \delta_x^2u_{i}^{n}+f^{n}_{i},  \quad 1\leq i\leq N_S-1~,1\leq n\leq N_T,    \label{schm1}\\
&{^{C}_{0}}\!\mathbb{D}_t^{\alpha}u^{n}_{0}=\frac{2}{ h }\left[ \delta_x u^{n}_{\frac{1}{2}}-{^{C}_{0}}\!\mathbb{D}_t^{\alpha/2}u^{n}_{0}\right]+f^n_{0},\label{schm2}\\
&{^{C}_{0}}\!\mathbb{D}_t^{\alpha}u^{n}_{N_S}=\frac{2}{ h }\left[-\delta_x u^{n}_{{N_S}-\frac{1}{2}} -{^{C}_{0}}\!\mathbb{D}_t^{\alpha/2}u^{n}_{N_S}\right]+f^n_{{N_S}},\label{schm3}\\
& u_i^{0}=u_0(x_i),  \quad  0\leq i\leq {N_S}.\label{schm4}
\end{align}
Replacing the standard L1-approximation ${^{C}_{0}}\!\mathbb{D}$
for the Caputo derivative by our fast evaluation scheme ${^{FC}_{0}}\!\mathbb{D}$, we obtain a
fast FD scheme of the following form
\begin{align}
&{^{FC}_{0}}\!\mathbb{D}_t^{\alpha}u^{n}_{i}= \delta_x^2u_{i}^{n}+f^{n}_{i},  \quad 1\leq i\leq {N_S}-1~,1\leq n\leq N_T,    \label{Fschm1}\\
&{^{FC}_{0}}\!\mathbb{D}_t^{\alpha}u^{n}_{0}=\frac{2}{ h }\left[ \delta_xu^{n}_{\frac{1}{2}}-{^{FC}_{0}}\mathbb{D}_t^{\alpha/2}u^{n}_{0}\right]+f^n_{0},\label{Fschm2}\\
&{^{FC}_{0}}\!\mathbb{D}_t^{\alpha}u^{n}_{{N_S}}=\frac{2}{ h }\left[-\delta_x u^{n}_{{N_S}-\frac{1}{2}} -{^{FC}_{0}}\mathbb{D}_t^{\alpha/2}u^{n}_{{N_S}}\right]+f^n_{{N_S}},\label{Fschm3}\\
& u_i^{0}=u_0(x_i),  \quad  0\leq i\leq {N_S}.\label{Fschm4}
\end{align}
\subsection{Stability and Error Analysis of the New Scheme}
Let $\mathcal{S}_h = \{u|u=(u_0,u_1,\cdots,u_{N_s})\}$. We first recall an elementary property of
the mesh function $u\in \mathcal{S}_h$.
\begin{lemma}[\cite{Gao12}]\label{8}
For any mesh function $u$ defined on $ \mathcal{S}_h$, the following inequality holds
\begin{align*}
\left\|u\right\|^2_{\infty}\leq\theta \left\|\delta_x u\right\|^2+(\frac{1}{\theta}+\frac{1}{L})\left\|u\right\|^2,\quad \forall \theta >0,
\end{align*}
where $L$ is the length of the computational domain and here $L = x_r - x_l$.
\end{lemma}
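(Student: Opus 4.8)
The plan is to establish this as the discrete counterpart of the one-dimensional Agmon/Sobolev interpolation inequality $\|u\|_\infty^2 \le \theta\|u'\|^2 + (\frac{1}{\theta}+\frac{1}{L})\|u\|^2$. In the continuous setting one writes $u(x)^2 - u(y)^2 = 2\int_y^x u(s)u'(s)\,ds$, bounds the right side by $2\|u\|\,\|u'\|$ via Cauchy--Schwarz, and then averages over $y\in[0,L]$ to replace the $u(y)^2$ term by $\frac{1}{L}\|u\|^2$. I would mimic each of these three moves at the discrete level, working with the discrete $L^2$ norm $\|u\|^2 = h\sum_i u_i^2$ and $\|\delta_x u\|^2 = h\sum_i (\delta_x u_{i+1/2})^2$.

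First, for any two indices $m$ and $k$ I would use the telescoping identity $u_m^2 - u_k^2 = \sum (u_{i+1}^2 - u_i^2)$ over the indices between $k$ and $m$, together with the discrete product rule $u_{i+1}^2 - u_i^2 = h\,(\delta_x u_{i+1/2})(u_{i+1}+u_i)$, which follows from $u_{i+1}-u_i = h\,\delta_x u_{i+1/2}$. Extending the sum to run over all $i$ and applying the discrete Cauchy--Schwarz inequality with the splitting $\sqrt{h}\,\delta_x u_{i+1/2}$ against $\sqrt{h}\,(u_{i+1}+u_i)$ gives
\begin{equation*}
|u_m^2 - u_k^2| \le \|\delta_x u\|\Big(h\sum_i (u_{i+1}+u_i)^2\Big)^{1/2} \le 2\,\|u\|\,\|\delta_x u\|,
\end{equation*}
where the last step uses $(u_{i+1}+u_i)^2 \le 2(u_{i+1}^2+u_i^2)$ together with the fact that each nodal value is counted at most twice, so that $h\sum_i (u_{i+1}+u_i)^2 \le 4\|u\|^2$.

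Second, I would average the resulting bound $u_m^2 \le u_k^2 + 2\|u\|\,\|\delta_x u\|$ over $k = 0,\dots,N_S$. Summing and dividing by the $N_S+1$ grid points replaces the $u_k^2$ term by $\frac{1}{(N_S+1)h}\|u\|^2$, and since $(N_S+1)h \ge N_S h = L$ this is at most $\frac{1}{L}\|u\|^2$; hence $u_m^2 \le \frac{1}{L}\|u\|^2 + 2\|u\|\,\|\delta_x u\|$ for every index $m$, and in particular for the maximizing index, giving $\|u\|_\infty^2 \le \frac{1}{L}\|u\|^2 + 2\|u\|\,\|\delta_x u\|$. Finally, Young's inequality $2ab \le \theta a^2 + \frac{1}{\theta} b^2$ applied with $a=\|\delta_x u\|$ and $b=\|u\|$ converts $2\|u\|\,\|\delta_x u\|$ into $\theta\|\delta_x u\|^2 + \frac{1}{\theta}\|u\|^2$, which yields the claimed estimate.

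The delicate points here are bookkeeping rather than conceptual. The main thing to get right is the coefficient $\frac{1}{L}$: it forces the averaging to be taken over the full set of $N_S+1$ nodes and relies precisely on $(N_S+1)h \ge L$, so the argument is sensitive to the convention adopted for the discrete $L^2$ norm (a plain sum $h\sum_i u_i^2$ versus a trapezoidal weighting of the endpoint nodes). I would therefore first pin down which convention the paper uses, because a trapezoidal norm alters the endpoint accounting in both the $h\sum_i(u_{i+1}+u_i)^2 \le 4\|u\|^2$ step and the averaging step, and then verify that the factor $2$ and the coefficient $\frac{1}{L}$ survive unchanged. Everything else---the discrete product rule and the two applications of Cauchy--Schwarz and of Young---is routine.
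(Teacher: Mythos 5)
The paper itself gives no proof of this lemma---it is quoted verbatim from \cite{Gao12}---and the argument in that source is precisely your telescoping-plus-averaging-plus-Young scheme, so your overall route is the intended one and your first and third steps are sound. The one point you deferred (``pin down which convention the paper uses'') is exactly where the proposal as written breaks: the paper works with the trapezoidal norm $\|u\|^2=h\bigl(\tfrac{1}{2}u_0^2+\sum_{i=1}^{N_S-1}u_i^2+\tfrac{1}{2}u_{N_S}^2\bigr)$, as one can see from the weights $\tfrac{h}{2}u_0^k$ and $\tfrac{h}{2}u_{N_S}^k$ used at the boundary nodes in the proof of Theorem \ref{theorem1}, and under this convention your uniform average over the $N_S+1$ nodes does \emph{not} produce the term $\tfrac{1}{L}\|u\|^2$. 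Concretely, for $u_0=u_{N_S}=1$ and all interior values zero, $\tfrac{1}{N_S+1}\sum_k u_k^2=\tfrac{2}{N_S+1}$ while $\tfrac{1}{L}\|u\|^2=\tfrac{1}{N_S}$, so the inequality $\tfrac{1}{N_S+1}\sum_k u_k^2\le\tfrac{1}{L}\|u\|^2$ fails for every $N_S\ge 2$; the issue is that uniform averaging overweights the endpoint nodes relative to the norm.

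The repair is immediate and is what the cited proof does: instead of averaging uniformly, multiply the bound $u_m^2\le u_k^2+2\|u\|\,\|\delta_x u\|$ by the trapezoidal weights $\tfrac{h}{L}\sigma_k$ with $\sigma_0=\sigma_{N_S}=\tfrac{1}{2}$ and $\sigma_k=1$ otherwise, and sum over $k$. These weights sum to exactly $\tfrac{h}{L}\,N_S=1$, and $\sum_k\tfrac{h}{L}\sigma_k u_k^2=\tfrac{1}{L}\|u\|^2$ with equality, which yields $u_m^2\le\tfrac{1}{L}\|u\|^2+2\|u\|\,\|\delta_x u\|$ cleanly. Note also that your step-one estimate is actually \emph{tight} under this convention: $2h\sum_{i=0}^{N_S-1}(u_{i+1}^2+u_i^2)=4h\bigl(\tfrac{1}{2}u_0^2+\sum_{i=1}^{N_S-1}u_i^2+\tfrac{1}{2}u_{N_S}^2\bigr)=4\|u\|^2$ exactly, since interior nodes are counted twice and endpoints once, matching the trapezoid weighting. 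With that single substitution in the averaging step, your proof is complete and coincides with the standard one.
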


We now show the following prior estimate holds for the solution of the new FD scheme.
\begin{theorem}[Prior Estimate]\label{theorem1}
Suppose $\{u_i^k|0\leq i\leq {N_S}, 0\leq k\leq N_T\}$ is the solution of the finite difference scheme \eqref{Fschm1}--\eqref{Fschm4}.
Then for any $1\leq n\leq N_T$,
\begin{align} \label{priest}
\Delta t\sum_{k=1}^n\left\|u^k\right\|_{\infty}^2 \leq & \frac{2\big(1+\sqrt{1+L^2\mu}\big)}{L\mu}\bigg(\rho\left\|u^0\right\|^2+\varrho\left[(u_0^0)^2+(u_{N_S}^0)^2\right]\nonumber\\
&+\frac{\Delta t}{8\nu}\sum_{k=1}^n\left[(hf_0^k)^2+(hf_{N_S}^k)^2\right]+\frac{\Delta t}{\mu}\sum_{k=1}^nh\sum_{i=1}^{{N_S}-1}(f_i^k)^2\bigg),
\end{align}
where
\begin{equation}\label{rhomunu}
\begin{aligned}
  &\rho=\frac{t_n^{1-\alpha}-\alpha(1-\alpha)\varepsilon t_{n-1}\Delta t}{\Gamma(2-\alpha)},
  &\mu=\frac{t_n^{-\alpha}-2\alpha\varepsilon t_{n-1}}{\Gamma(1-\alpha)},\\
  &\varrho=\frac{t_n^{1-\frac{\alpha}{2}}-\frac{\alpha}{2}(1-\frac{\alpha}{2})\varepsilon t_{n-1}\Delta t}{\Gamma(2-\frac{\alpha}{2})},
  &\nu=\frac{t_n^{-\frac{\alpha}{2}}-\alpha\varepsilon t_{n-1}}{\Gamma(1-\frac{\alpha}{2})}.
\end{aligned}
\end{equation}

\end{theorem}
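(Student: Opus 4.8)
The plan is to run a discrete energy argument in the spirit of \cite{Gao12,Sun:06}, using Lemma \ref{lemma3} to control the fractional-in-time terms and Lemma \ref{8} to pass from the discrete $H^1$ and $L^2$ norms to the $L^\infty$ norm. First I would fix the trapezoidal discrete inner product $\langle u,v\rangle = h\left(\tfrac12 u_0v_0 + \sum_{i=1}^{N_S-1}u_iv_i + \tfrac12 u_{N_S}v_{N_S}\right)$, with induced norm $\|\cdot\|$, and pair the scheme against $u^k$: multiply the interior equations \eqref{Fschm1} by $h u_i^k$ and the two boundary equations \eqref{Fschm2}--\eqref{Fschm3} by $\tfrac h2 u_0^k$ and $\tfrac h2 u_{N_S}^k$, then sum in space. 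Discrete summation by parts turns $h\sum_{i=1}^{N_S-1}(\delta_x^2u_i^k)u_i^k$ into $-\|\delta_x u^k\|^2$ plus the boundary flux terms $u_{N_S}^k\delta_x u_{N_S-1/2}^k - u_0^k\delta_x u_{1/2}^k$; the factor $2/h$ built into the boundary conditions is designed precisely so that these flux terms cancel against the $\delta_x$ contributions coming from \eqref{Fschm2}--\eqref{Fschm3}. The result is the clean energy identity $\langle {^{FC}_{0}}\mathbb{D}_t^{\alpha} u^k,u^k\rangle + \|\delta_x u^k\|^2 + u_0^k\,{^{FC}_{0}}\mathbb{D}_t^{\alpha/2}u_0^k + u_{N_S}^k\,{^{FC}_{0}}\mathbb{D}_t^{\alpha/2}u_{N_S}^k = \langle f^k,u^k\rangle$.

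Next I would multiply by $\Delta t$, sum over $k=1,\dots,n$, and invoke Lemma \ref{lemma3} componentwise in space. For each fixed grid index $i$ the sequence $\{u_i^k\}_k$ is a mesh function on $\Omega_t$, so Lemma \ref{lemma3} (with exponent $\alpha$) applies; summing the pointwise bounds against the nonnegative trapezoidal weights yields $\Delta t\sum_{k=1}^n\langle {^{FC}_{0}}\mathbb{D}_t^{\alpha} u^k,u^k\rangle \ge \tfrac\mu2\Delta t\sum_{k=1}^n\|u^k\|^2 - \rho\|u^0\|^2$, with $\mu,\rho$ as in \eqref{rhomunu}. Applying the same lemma with exponent $\alpha/2$ to the two boundary sequences $\{u_0^k\}_k$ and $\{u_{N_S}^k\}_k$ produces the analogous bounds governed by $\nu,\varrho$. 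For the source term I would use Young's inequality on each piece of $\langle f^k,u^k\rangle$: the interior contribution splits as $h\sum_i f_i^k u_i^k \le \tfrac\mu4 h\sum_i (u_i^k)^2 + \tfrac1\mu h\sum_i(f_i^k)^2$ and the boundary contributions as $\tfrac h2 f_0^k u_0^k \le \tfrac\nu2 (u_0^k)^2 + \tfrac1{8\nu}(hf_0^k)^2$ (and likewise at $N_S$). The $u$-parts are absorbed into the coercive terms—leaving $\tfrac\mu4$ of the $\|u^k\|^2$ term and cancelling the boundary $(u_0^k)^2,(u_{N_S}^k)^2$ terms entirely—so that after rearrangement I arrive at $\tfrac\mu4\Delta t\sum_{k=1}^n\|u^k\|^2 + \Delta t\sum_{k=1}^n\|\delta_x u^k\|^2 \le \mathcal F$, where $\mathcal F$ is exactly the quantity inside the large parentheses on the right-hand side of \eqref{priest}.

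Finally I would apply Lemma \ref{8}, $\|u^k\|_\infty^2 \le \theta\|\delta_x u^k\|^2 + (\tfrac1\theta+\tfrac1L)\|u^k\|^2$, sum in $k$, and choose $\theta$ to make the right-hand side a single multiple $\lambda$ of $\Delta t\sum_{k=1}^n\|\delta_x u^k\|^2 + \tfrac\mu4\Delta t\sum_{k=1}^n\|u^k\|^2 \le \mathcal F$. Matching both coefficients forces $\theta=\lambda$ together with $\tfrac1\theta+\tfrac1L = \tfrac{\lambda\mu}4$, i.e. the quadratic $\tfrac\mu4\theta^2 - \tfrac1L\theta - 1 = 0$; its positive root $\theta = \tfrac{2}{L\mu}\bigl(1+\sqrt{1+L^2\mu}\bigr)$ produces precisely the prefactor in \eqref{priest}, giving the claimed estimate. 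The difficulty here is bookkeeping rather than conceptual: one must carry out the summation by parts so that the boundary fluxes cancel exactly, apply Lemma \ref{lemma3} separately with the two exponents $\alpha$ and $\alpha/2$ while tracking which of $\mu,\rho,\nu,\varrho$ attaches to which term, and tune the Young's-inequality constants ($\tfrac1\mu$ and $\tfrac1{8\nu}$) so that the data terms land with exactly the coefficients in \eqref{priest}. I would also note at the outset that the argument tacitly requires $\mu>0$ and $\nu>0$, which hold once the sum-of-exponentials tolerance $\varepsilon$ is small enough that the coercivity constants in Lemma \ref{lemma3} remain positive.
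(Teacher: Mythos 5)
Your proposal is correct and follows essentially the same route as the paper's own proof: the same multipliers ($hu_i^k$ in the interior, $\tfrac{h}{2}u_0^k$ and $\tfrac{h}{2}u_{N_S}^k$ at the boundary), the same summation by parts yielding $\|\delta_x u^k\|^2$ with exact cancellation of the boundary fluxes, Lemma \ref{lemma3} applied with exponents $\alpha$ and $\alpha/2$, Young's inequality with precisely the constants $\tfrac{\mu}{4}$, $\tfrac{\nu}{2}$, $\tfrac{1}{\mu}$, $\tfrac{1}{8\nu}$, and the same choice $\theta=2\bigl(1+\sqrt{1+L^2\mu}\bigr)/(L\mu)$ in Lemma \ref{8}. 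Your closing observation that the argument tacitly needs $\mu>0$ and $\nu>0$ (i.e.\ $\varepsilon$ sufficiently small) is a fair point that the paper leaves implicit.
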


\begin{proof}
Multiplying $hu_i^k$ on both sides of (\ref{Fschm1}), and summing up for $i$ from 1 to ${N_S}-1$, we have
\[h\sum_{i=1}^{{N_S}-1}({^{FC}_{0}}\!\mathbb{D}_t^{\alpha}u^{k}_{i})u^{k}_{i}-h\sum_{i=1}^{{N_S}-1}(\delta_x^2u_i^k)u_i^k=h\sum_{i=1}^{{N_S}-1}f_i^ku_i^k.\]
Multiplying $\frac{h}{2}u_0^k$ and $\frac{h}{2}u_{N_S}^k$ on both sides of (\ref{Fschm2}) and (\ref{Fschm3}), respectively, then adding the results with the above identity, we obtain
\begin{align}\label{5}
&({^{FC}_{0}}\!\mathbb{D}_t^{\alpha}u^{k},u^{k})+\left[-(\delta_xv_{\frac{1}{2}}^k)u_0^k-h\sum_{i=1}^{{N_S}-1}(\delta_x^2u_i^k)u_i^k+(\delta_xu_{{N_S}-\frac{1}{2}}^k)u_{{N_S}}^k\right]\\ \nonumber
&+({^{FC}_{0}}\!\mathbb{D}_t^{\frac{\alpha}{2}}u_0^{k},u_0^{k})+({^{FC}_{0}}\!\mathbb{D}_t^{\frac{\alpha}{2}}u_{N_S}^{k},u_{N_S}^{k}) =\frac{1}{2}(hf_0^k)u_0^k+h\sum_{i=1}^{{N_S}-1}f_i^ku_i^k+\frac{1}{2}(f_{N_S}^k)u_{N_S}^k.
\end{align}
Observing the summation by parts, we have
\begin{align}\label{4}
-(\delta_xv_{\frac{1}{2}}^k)u_0^k-h\sum_{i=1}^{{N_S}-1}(\delta_x^2u_i^k)u_i^k+(\delta_xu_{{N_S}-\frac{1}{2}}^k)u_{{N_S}}^k=\left\|\delta_xu^k\right\|^2.
\end{align}
Substituting (\ref{4}) into (\ref{5}), and multiplying $\Delta t$ on both sides of the resulting identity, and summing up for $k$ from 1 to $n$, it follows from Lemma \ref{lemma3} that
\begin{align}
&\Delta t\frac{t_n^{-\alpha}-2\alpha\varepsilon t_{n-1}}{2\Gamma(1-\alpha)}\sum_{k=1}^n\left\|u^k\right\|^2+\Delta t\frac{t_n^{-\frac{\alpha}{2}}-\alpha\varepsilon t_{n-1}}{2\Gamma(1-\frac{\alpha}{2})}\sum_{k=1}^n\left[(u_0^k)^2+(u_{N_S}^k)^2\right]+\Delta t\sum_{k=1}^n\left\|\delta_xu^k\right\|^2\nonumber\\
\leq&\frac{t_n^{1-\alpha}-\alpha(1-\alpha)\varepsilon t_{n-1}\Delta t}{\Gamma(2-\alpha)}\left\|u^0\right\|^2+\frac{t_n^{1-\frac{\alpha}{2}}-\frac{\alpha}{2}(1-\frac{\alpha}{2})\varepsilon t_{n-1}\Delta t}{\Gamma(2-\frac{\alpha}{2})}\left[(u_0^0)^2+(u_{N_S}^0)^2\right]\nonumber\\
&+\Delta t\sum_{k=1}^n\left[\frac{1}{2}(hf_0^k)u_0^k+h\sum_{i=1}^{{N_S}-1}f_i^ku_i^k+\frac{1}{2}(f_{N_S}^k)u_{N_S}^k\right].\label{7}
\end{align}
Applying the Cauchy-Schwarz inequality, we obtain
\begin{align}
&\frac{1}{2}(hf_0^k)u_0^k+h\sum_{i=1}^{{N_S}-1}f_i^ku_i^k+\frac{1}{2}(f_{N_S}^k)u_{N_S}^k\nonumber\\
\leq&\frac{t_n^{-\frac{\alpha}{2}}-\alpha\varepsilon t_{n-1}}{2\Gamma(1-\frac{\alpha}{2})}\left[(u_0^k)^2+(u_{N_S}^k)^2\right]+\frac{\Gamma(1-\frac{\alpha}{2})}{8(t_n^{-\frac{\alpha}{2}}-\alpha\varepsilon t_{n-1})}\left[(hf_0^k)^2+(hf_{N_S}^k)^2\right]\nonumber\\
&+h\sum_{i=1}^{{N_S}-1}\left[\frac{t_n^{-\alpha}-2\alpha\varepsilon t_{n-1}}{4\Gamma(1-\alpha)}(u_i^k)^2+\frac{\Gamma(1-\alpha)}{t_n^{-\alpha}-2\alpha\varepsilon t_{n-1}}(f_i^k)^2\right]\nonumber\\
\leq&\frac{t_n^{-\frac{\alpha}{2}}-\alpha\varepsilon t_{n-1}}{2\Gamma(1-\frac{\alpha}{2})}\left[(u_0^k)^2+(u_{N_S}^k)^2\right]+\frac{\Gamma(1-\frac{\alpha}{2})}{8(t_n^{-\frac{\alpha}{2}}-\alpha\varepsilon t_{n-1})}\left[(hf_0^k)^2+(hf_{N_S}^k)^2\right]\nonumber\\
&+\frac{t_n^{-\alpha}-2\alpha\varepsilon t_{n-1}}{4\Gamma(1-\alpha)}\left\|u^k\right\|^2+h\sum_{i=1}^{{N_S}-1}\frac{\Gamma(1-\alpha)}{t_n^{-\alpha}-2\alpha\varepsilon t_{n-1}}(f_i^k)^2. \label{6}
\end{align}
The substitution of (\ref{6}) into (\ref{7}) produces
\begin{align}\label{10}
\frac{\mu}{4}\Delta t\sum_{k=1}^n\left\|u^k\right\|^2&+\Delta t\sum_{k=1}^n\left\|\delta_xu^k\right\|^2
\leq\rho\left\|u^0\right\|^2+\varrho\left[(u_0^0)^2+(u_{N_S}^0)^2\right] \nonumber\\ 
&+\frac{\Delta t}{8\nu}\sum_{k=1}^n\left[(hf_0^k)^2+(hf_{N_S}^k)^2\right]+\frac{\Delta t}{\mu}\sum_{k=1}^nh\sum_{i=1}^{{N_S}-1}(f_i^k)^2.
\end{align}
Taking $\theta>0$ such that $\frac{1\slash\theta+1\slash L}{\theta}=\frac{\mu}{4}$ (i.e., $\theta=2\Big(1+\sqrt{1+L^2\mu}\Big)\slash(L\mu)$), and following from Lemma \ref{8}, we have
\begin{align}\label{9}
\Delta t\sum_{k=1}^n\left\|u^k\right\|^2_{\infty}\leq\frac{2\big(1+\sqrt{1+L^2\mu}\big)}{L\mu}\bigg(\frac{\mu}{4}\Delta t\sum_{k=1}^n\left\|u^k\right\|^2+\Delta t\sum_{k=1}^n\left\|\delta_xu^k\right\|^2\bigg).
\end{align}
Combining (\ref{10}) with (\ref{9}), we obtain the inequality \eqref{priest}.
\end{proof}

The priori estimate leads to the stability of the new FD scheme.
\begin{theorem}[Stability]
The scheme (\ref{Fschm1})-(\ref{Fschm4}) is unconditionally stable for any given compactly supported initial data and source term.
\end{theorem}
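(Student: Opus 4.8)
The plan is to obtain unconditional stability as an immediate corollary of the a priori estimate in Theorem~\ref{theorem1}. By definition, the scheme is stable if its solution is controlled, in a suitable norm, by the data --- the initial profile $u^0$, the boundary initial values $u_0^0,\,u_{N_S}^0$, and the source term $f$ --- through a bound that imposes no relation between the temporal step $\Delta t$ and the spatial step $h$. The estimate \eqref{priest} is precisely of this type: it bounds $\Delta t\sum_{k=1}^n\|u^k\|_\infty^2$ by a linear combination of $\|u^0\|^2$, $(u_0^0)^2+(u_{N_S}^0)^2$, and the discrete norms of $f$, with prefactors assembled from $\rho,\varrho,\mu,\nu$. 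First I would make the claimed notion of stability precise (boundedness of the solution norm by the data, uniformly in $\Delta t$, $h$, and $N_T$) and then simply invoke \eqref{priest}.

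The only substantive step beyond quoting Theorem~\ref{theorem1} is to verify that every coefficient in \eqref{priest} stays positive and finite for arbitrary step sizes. Concretely, the prefactor $\tfrac{2(1+\sqrt{1+L^2\mu})}{L\mu}$ together with the weights $\rho$, $\varrho$, $\tfrac{1}{8\nu}$, $\tfrac{1}{\mu}$ must all be bounded, which reduces to establishing $\mu>0$ and $\nu>0$, where
\[
\mu=\frac{t_n^{-\alpha}-2\alpha\varepsilon t_{n-1}}{\Gamma(1-\alpha)},\qquad
\nu=\frac{t_n^{-\frac{\alpha}{2}}-\alpha\varepsilon t_{n-1}}{\Gamma(1-\frac{\alpha}{2})}.
\]
Using $t_{n-1}<t_n\le T$, I would bound $t_n^{-\alpha}-2\alpha\varepsilon t_{n-1}>t_n^{-\alpha}-2\alpha\varepsilon t_n=t_n^{-\alpha}\bigl(1-2\alpha\varepsilon t_n^{1+\alpha}\bigr)\ge t_n^{-\alpha}\bigl(1-2\alpha\varepsilon T^{1+\alpha}\bigr)$, so that both $\mu$ and $\nu$ are strictly positive as soon as the sum-of-exponentials precision obeys $\varepsilon<(2\alpha T^{1+\alpha})^{-1}$. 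Since $\varepsilon$ is a free accuracy parameter that we may take as small as desired, this condition is always attainable.

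The main --- indeed the only delicate --- point is exactly this positivity: the sum-of-exponentials approximation perturbs the coercivity constants by the terms $-2\alpha\varepsilon t_{n-1}$ and $-\alpha\varepsilon t_{n-1}$, and were $\varepsilon$ too large relative to $T$ these could reverse sign and invalidate the estimate. Once $\mu,\nu>0$ are secured, the conclusion is immediate: the right-hand side of \eqref{priest} depends only on the data, with a bound that involves no coupling between $\Delta t$ and $h$. I would close by observing that the absence of any such coupling is precisely the meaning of unconditional stability, which completes the argument.
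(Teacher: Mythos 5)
Your proposal is correct and follows the paper's own route exactly: the paper offers no separate proof of the stability theorem, simply asserting it as an immediate consequence of the a priori estimate \eqref{priest}. Your additional verification that $\mu>0$ and $\nu>0$ once $\varepsilon<\left(2\alpha T^{1+\alpha}\right)^{-1}$ is a worthwhile detail that the paper leaves implicit in its choice of the sum-of-exponentials precision.
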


We now present an error analysis of the new scheme \eqref{Fschm1}-\eqref{Fschm4}.
\begin{theorem}[Error Analysis] Suppose $u(x,t)\in C_{x,t}^{4,2}(\left[x_l,x_r\right]\times[0,T])$ and $\{u_i^k|0\leq i\leq {N_S}, 0\leq k \leq N_T\}$ are solutions of the problem (\ref{exacta1})-(\ref{exacta3}) and the difference scheme (\ref{Fschm1})-(\ref{Fschm4}), respectively. Let $e^k_i = u_i^k - u(x_i,t_k)$. Then there exists a positive constant $c_2$ such that
\begin{align}\label{convergence}
\sqrt{\Delta t\sum_{k=1}^n\|e^k\|_{\infty}^2 }\leq c_2(h^2+\Delta t^{2-\alpha} +\varepsilon),\quad 1\leq n\leq N_T,
\end{align}
where
$c_2^2=\frac{4c_1^2T\big(1+\sqrt{1+L^2\mu}\big)}{L\mu}\Big(\frac{1}{\nu}+\frac{L}{\mu}\Big)$
with $c_1$ is a positive constant {\rm{(}}see \eqref{11}-\eqref{13}{\rm{)}}, and $\mu$, $\nu$ are defined in \eqref{rhomunu}.
\end{theorem}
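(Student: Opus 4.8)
The plan is to follow the consistency-plus-stability paradigm, using the prior estimate of Theorem~\ref{theorem1} as the stability engine. First I would derive the system satisfied by the error $e_i^k = u_i^k - u(x_i,t_k)$. Substituting the exact solution of \eqref{exacta1}--\eqref{exacta3} into the fast scheme \eqref{Fschm1}--\eqref{Fschm4} produces local truncation errors $R_i^n$ at the interior nodes and $\tilde R_0^n,\tilde R_{N_S}^n$ at the two boundary nodes. Since the discrete initial data are matched exactly, $e_i^0=0$, and subtracting the scheme evaluated at the exact solution from the scheme evaluated at the numerical solution shows that $e$ satisfies \emph{exactly} the system \eqref{Fschm1}--\eqref{Fschm4} with $e$ in place of $u$, with the interior source $f_i^n$ replaced by $R_i^n$, the boundary sources $f_0^n,f_{N_S}^n$ replaced by $\tilde R_0^n,\tilde R_{N_S}^n$, and zero initial data.

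The consistency step is to bound these truncation errors, and the boundary is the delicate part. At an interior node, using that $u$ solves the PDE, $R_i^n$ splits into the central-difference error $u_{xx}(x_i,t_n)-\delta_x^2 u(x_i,t_n)=O(h^2)$ and the temporal error $\,{^{F}_{}}\!R^{n}u(x_i)$, which by Lemma~\ref{lemma6} (see \eqref{Terror}) is $O(\Delta t^{2-\alpha}+\varepsilon)$; hence $|R_i^n|\le c_1(h^2+\Delta t^{2-\alpha}+\varepsilon)$. At the boundary, combining the PDE, the Neumann-type condition $u_x={_0^C}D_t^{\alpha/2}u$, and Lemma~\ref{lem5} gives the identity $\tilde R_0^n = -\frac{h}{3}u_{xxx}(\cdot)-{^{F}_{}}\!R^{n}u(x_0)-\frac{2}{h}\,{^{F}_{}}\!R^{n}_{\alpha/2}u(x_0)$, where ${^{F}_{}}\!R^{n}_{\alpha/2}u$ is the half-order temporal error, of size $O(\Delta t^{2-\alpha/2}+\varepsilon)$ by Lemma~\ref{lemma6} applied with $\alpha/2$ in place of $\alpha$. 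The $2/h$ factor appears to amplify this half-order error, and the resolution is that in \eqref{priest} the boundary sources enter only as $(h\tilde R_0^n)^2$: multiplying through by $h$ yields $h\tilde R_0^n = O(h^2)+O\!\big(h(\Delta t^{2-\alpha}+\varepsilon)\big)+O(\Delta t^{2-\alpha/2}+\varepsilon)$, and since $2-\frac{\alpha}{2}>2-\alpha$ the half-order term is dominated by $\Delta t^{2-\alpha}$, so $|h\tilde R_0^n|\le c_1(h^2+\Delta t^{2-\alpha}+\varepsilon)$ as well (and identically at $x_{N_S}$).

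Finally I would invoke Theorem~\ref{theorem1} applied to $e$, which is legitimate provided $\varepsilon$ is small enough that the weights $\mu,\nu$ in \eqref{rhomunu} stay positive. Because $e^0=0$, the $\rho$ and $\varrho$ terms drop and only the source contributions survive:
\[
\Delta t\sum_{k=1}^n\|e^k\|_\infty^2 \le \frac{2\big(1+\sqrt{1+L^2\mu}\big)}{L\mu}\left(\frac{\Delta t}{8\nu}\sum_{k=1}^n\big[(h\tilde R_0^k)^2+(h\tilde R_{N_S}^k)^2\big] + \frac{\Delta t}{\mu}\sum_{k=1}^n h\sum_{i=1}^{N_S-1}(R_i^k)^2\right).
\]
Inserting the uniform truncation bounds, using $n\Delta t=t_n\le T$ for the time sums and $h(N_S-1)<L$ for the spatial sum, and collecting constants (absorbing the elementary factors from the truncation estimates into $c_1$) then gives $\Delta t\sum_{k=1}^n\|e^k\|_\infty^2\le c_2^2(h^2+\Delta t^{2-\alpha}+\varepsilon)^2$ with $c_2^2=\frac{4c_1^2T(1+\sqrt{1+L^2\mu})}{L\mu}\big(\frac{1}{\nu}+\frac{L}{\mu}\big)$, and taking square roots yields \eqref{convergence}. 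The main obstacle is precisely the boundary bookkeeping of the second paragraph: verifying that the $2/h$ amplification of the half-order Caputo truncation error is exactly compensated by the $h^2$ weighting of boundary sources in the prior estimate, and that the resulting $\Delta t^{2-\alpha/2}$ contribution is subsumed by $\Delta t^{2-\alpha}$.
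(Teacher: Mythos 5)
Your proposal follows the paper's proof essentially step for step: the same error system \eqref{eq1}--\eqref{eq4} with truncation sources and zero initial data, the same truncation bounds \eqref{11}--\eqref{13} (including the key observation that the $\frac{2}{h}$-amplified half-order Caputo error $\Delta t^{2-\alpha/2}/h$ and the $\varepsilon/h$ term are neutralized by the $h$-weighting of the boundary sources in the prior estimate and then dominated via $\Delta t^{2-\alpha/2}\leq\Delta t^{2-\alpha}$, exactly as in \eqref{convergenceeq2}), and the same application of Theorem \ref{theorem1} producing the constant $c_2^2=\frac{4c_1^2T\big(1+\sqrt{1+L^2\mu}\big)}{L\mu}\big(\frac{1}{\nu}+\frac{L}{\mu}\big)$. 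Your explicit caveat that $\varepsilon$ must be small enough for $\mu$ and $\nu$ in \eqref{rhomunu} to remain positive is a condition the paper leaves implicit; otherwise the two arguments coincide.
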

\begin{proof}
We observe that the error $e^k_i$ satisfies the following FD scheme:
\begin{align}
&{^{FC}_{0}}\!\mathbb{D}_t^{\alpha}e^{k}_{i}= \delta_x^2e_{i}^{k}+T^{k}_{i},  \quad 1\leq i\leq {N_S}-1~,1\leq k\leq N_T,    \label{eq1}\\
&{^{FC}_{0}}\!\mathbb{D}_t^{\alpha}e^{k}_{0}=\frac{2}{ h }\left[ \delta_xe^{k}_{\frac{1}{2}}-{^{FC}_{0}}\mathbb{D}_t^{\alpha/2}e^{k}_{0}\right]+T^k_{0},\label{eq2}\\
&{^{FC}_{0}}\!\mathbb{D}_t^{\alpha}e^{k}_{{N_S}}=\frac{2}{ h }\left[-\delta_x e^{n}_{{N_S}-\frac{1}{2}} -{^{FC}_{0}}\mathbb{D}_t^{\alpha/2}e^{k}_{{N_S}}\right]+T^k_{{N_S}},\label{eq3}\\
& e_i^{0}=0,  \quad  0\leq i\leq {N_S}.\label{eq4}
\end{align}
where the truncation terms $T^k$ at the interior and boundary points are given by the formulas
\begin{align*}
&T_i^k=-\left[{_{0}^{C}\!}D_t^{\alpha}u(x_i,t_k)-{^{FC}_{0}} \mathbb{D}_t^{\alpha}U_i^k\right]+\left[u_{xx}(x_i,t_k)-\delta_x^2U_i^k\right],~~1\leq i\leq {N_S},~1\leq k\leq N_T,\\
&T_0^k=\left\{u_{xx}(x_0,t_k)-\frac{2}{h}\left[\delta_xU_{\frac{1}{2}}^k-u_x(x_0,t_k)\right]-\frac{2}{h}\left[{_{0}^{C}\!}D_t^{\frac{\alpha}{2}}u(x_0,t_k)-{^{FC}_{0}} \mathbb{D}_t^{\frac{\alpha}{2}}U_0^k\right]\right\}\nonumber\\
&\qquad-\left[{_{0}^{C}\!}D_t^{\alpha}u(x_0,t_k)-{^{FC}_{0}} \mathbb{D}_t^{\alpha}U_0^k\right],~1\leq k\leq N_T,\\
&T_{N_S}^k=\left\{u_{xx}(x_{N_S},t_k)-\frac{2}{h}\left[u_x(x_{N_S},t_k)-\delta_xU_{{N_S}-\frac{1}{2}}^k\right]-\frac{2}{h}\left[{_{0}^{C}\!}D_t^{\frac{\alpha}{2}}u(x_{N_S},t_k)-{^{FC}_{0}} \mathbb{D}_t^{\frac{\alpha}{2}}U_{N_S}^k\right]\right\}\nonumber\\
&\qquad-\left[{_{0}^{C}\!}D_t^{\alpha}u(x_{N_S},t_k)-{^{FC}_{0}} \mathbb{D}_t^{\alpha}U_{N_S}^k\right],~1\leq k\leq N_T.
\end{align*}
Using Lemma \ref{lem5} and Taylor expansion, it is easy to show that the truncation terms $T^k$
satisfy the following error bounds
\begin{align}
&\left|T_i^k\right|\leq c_1(\Delta t^{2-\alpha}+h^2+{\varepsilon}),\quad 1\leq i\leq {N_S}-1,\quad 1\leq k\leq N_T,\label{11}\\
&\left|T_0^k\right|\leq c_1(\Delta t^{2-\alpha}+h+\frac{\Delta t^{2-\alpha\slash 2}}{h}+\frac{\varepsilon}{h}), \quad 1\leq k\leq N_T,\label{12}\\
&\left|T_{N_S}^k\right|\leq c_1(\Delta t^{2-\alpha}+h+\frac{\Delta t^{2-\alpha\slash 2}}{h}+\frac{\varepsilon}{h}), \quad 1\leq k\leq N_T\label{13}
\end{align}
with $c_1$ some positive constant.
Thus, for $h\leq 1$ and $\Delta t\leq 1$, we have 
\begin{align}\label{convergenceeq2}
&\frac{1}{4\nu}\left[(hT_0^k)^2 +(hT_{N_S}^k)^2\right]+\frac{2}{\mu}h\sum_{i=1}^{{N_S}-1}(T_i^k)^2\nonumber\\
\leq&\frac{c_1^2}{2\nu}\Big(h\Delta t^{2-\alpha}+\Delta t^{2-\frac{\alpha}{2}}+h^2+{\varepsilon}\Big)^2+\frac{2c_1^2L}{\mu}\Big(\Delta t^{2-\alpha}+h^2+{\varepsilon}\Big)^2\nonumber\\
\leq&\frac{2c_1^2}{\nu}\Big(\Delta t^{2-\alpha}+h^2+{\varepsilon}\Big)^2+\frac{2c_1^2L}{\mu}\Big(\Delta t^{2-\alpha}+h^2+{\varepsilon}\Big)^2\nonumber\\
\leq&4c_1^2(\frac{1}{\nu}+\frac{L}{\mu})(\Delta t^{2-\alpha}+h^2)+4c_1^2(\frac{1}{\nu}+\frac{L}{\mu}){\varepsilon}^2
\end{align}
A direct application of Theorem \ref{theorem1} to the system (\ref{eq1})-(\ref{eq4}) produces
\begin{align}\label{convergenceeq1}
\Delta t\sum_{k=1}^n\|e^k\|_{\infty}^2 \leq\frac{\Delta t(1+\sqrt{1+L^2\mu})}{L\mu}\sum_{k=1}^n(\frac{1}{4\nu}\left[(hT_0^k)^2+(hT_{N_S}^k)^2\right]+\frac{2}{\mu}h\sum_{i=1}^{{N_S}-1}(T_i^k)^2).
\end{align}
Substituting (\ref{convergenceeq2}) into (\ref{convergenceeq1}), simplifying the resulting expressions, and
taking the square root for both sides, we obtain \eqref{convergence}.
\end{proof}
\begin{table}[!ht]
  \caption{The errors and convergence orders in time with fixed spatial size $h=\pi\slash20000$ for the fast scheme (\ref{Fschm1})-(\ref{Fschm4}) and the direct scheme (\ref{schm1})-(\ref{schm4}). } \label{tab1}
   \centering
    \begin{adjustwidth}{0cm}{0cm}
  \resizebox{!}{1.4cm}
  {
\begin{tabular}{cccccccccccc}
   \hline
 \multicolumn{1}{c}{\multirow{3}{0.5cm}{$\Delta t$}}& \multicolumn{5}{c}{$\alpha=0.2$} &\multicolumn{1}{c}{\multirow{1}{0cm}{}}  &\multicolumn{5}{c}{$\alpha=0.5$}\\
 \cline{2-6}\cline{8-12}\multicolumn{1}{c}{} &\multicolumn{2}{c}
       {Fast scheme} &\multicolumn{1}{c}{\multirow{2}{0.1cm}{}}  &\multicolumn{2}{c}{Direct scheme}&\multicolumn{1}{c}{\multirow{2}{0cm}{}}&\multicolumn{2}{c}{Fast scheme} &\multicolumn{1}{c}{\multirow{2}{0cm}{}}  &\multicolumn{2}{c}{Direct scheme}\\
\cline{2-3}  \cline{5-6}\cline{8-9} \cline{11-12}\multicolumn{1}{c}{}&\multicolumn{1}{c} {$E(h,\tau)$} & \multicolumn{1}{c}{$r_t$}&\multicolumn{1}{c}{}  &\multicolumn{1}{c} {$E(h,\tau)$} & \multicolumn{1}{c}{$r_t$} &\multicolumn{1}{c}{} &\multicolumn{1}{c} {$E(h,\tau)$} & \multicolumn{1}{c}{$r_t$}&\multicolumn{1}{c}{}  &\multicolumn{1}{c} {$E(h,\tau)$} & \multicolumn{1}{c}{$r_t$}\\ \hline
 $1\slash10$&1.570e-02&1.70&&1.570e-02&1.70&&8.151e-02&1.48&&8.151e-02&1.48\\
 $1\slash20$&4.846e-03&1.70&&4.844e-03&1.70&&2.922e-02&1.47&&2.923e-02&1.47\\
 $1\slash40$&1.489e-03&1.72&&1.488e-03&1.71&&1.052e-02&1.48&&1.052e-02&1.48\\
 $1\slash80$&4.524e-04&1.70&&4.541e-04&1.72&&3.781e-03&1.48&&3.784e-03&1.48\\
 $1\slash160$&1.395e-04&   &&1.375e-04&    &&1.357e-03&    &&1.357e-03&    \\\hline
    \end{tabular}
    }
    \end{adjustwidth}
\end{table}
\begin{table}[!ht]
  \caption{The errors, convergence orders in space, and CPU time in seconds
    with fixed temporal step size $\Delta t=1\slash30000$ for the fast scheme (\ref{Fschm1})-(\ref{Fschm4}) and the direct scheme (\ref{schm1})-(\ref{schm4}). }
    \label{tab2}
   \centering
    \begin{adjustwidth}{0cm}{0cm}
  \resizebox{!}{1.55cm}
  {
\begin{tabular}{cccccccccccc}
   \hline
 \multicolumn{1}{c}{\multirow{3}{0.5cm}{$h$}}& \multicolumn{5}{c}{$\alpha=0.2$} &\multicolumn{1}{c}{\multirow{1}{0cm}{}}  &\multicolumn{5}{c}{$\alpha=0.5$}\\
 \cline{2-6}\cline{8-12}\multicolumn{1}{c}{} &\multicolumn{2}{c}
       {Fast scheme } &\multicolumn{1}{c}{\multirow{2}{0.1cm}{}}  &\multicolumn{2}{c}{Direct scheme}&\multicolumn{1}{c}{\multirow{2}{0cm}{}}&\multicolumn{2}{c}{Fast scheme} &\multicolumn{1}{c}{\multirow{2}{0cm}{}}  &\multicolumn{2}{c}{Direct scheme}\\
\cline{2-3}  \cline{5-6}\cline{8-9} \cline{11-12}\multicolumn{1}{c}{}&\multicolumn{1}{c} {$E(h,\tau)$} & \multicolumn{1}{c}{$r_s$}&\multicolumn{1}{c}{}  &\multicolumn{1}{c} {$E(h,\tau)$} & \multicolumn{1}{c}{$r_s$} &\multicolumn{1}{c}{} &\multicolumn{1}{c} {$E(h,\tau)$} & \multicolumn{1}{c}{$r_s$}&\multicolumn{1}{c}{}  &\multicolumn{1}{c} {$E(h,\tau)$} & \multicolumn{1}{c}{$r_s$}\\ \hline
  $\pi\slash10$&8.862e-01&2.06&&8.632e-01&2.03&&8.652e-01&2.05&&8.035e-01&1.99\\
   $\pi\slash20$&2.122e-01&2.01&&2.107e-01&2.01&&2.087e-01&2.01&&2.016e-01&1.99\\
$\pi\slash40$&5.258e-02&2.00&&5.244e-02&2.00&&5.177e-02&2.00&&5.064e-02&1.99\\
  $\pi\slash80$&1.312e-02&2.00&&1.311e-02&2.00&&1.292e-02&2.00&&1.272e-02&1.99\\
   $\pi\slash160$&3.280e-03&&&3.277e-03&&&3.229e-03&&&3.195e-03&\\\hline
  CPU(s)&\multicolumn{2}{c}{43.37}&&\multicolumn{2}{c}{3304.66}&&\multicolumn{2}{c}{43.65}&&\multicolumn{2}{c}{2226.06}\\ \hline
    \end{tabular}
    }
    \end{adjustwidth}
\end{table}

\begin{figure}[htbp]
\centering
\includegraphics[width=0.45\textwidth]{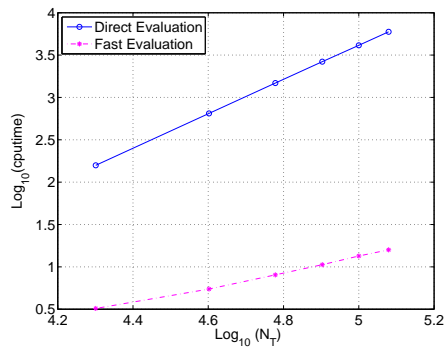}
\caption{The log-log (in base $10$) plot of the CPU time (in seconds) versus the
total number of time steps $N_T$ for Application I. Here $N_S=30$ and $\alpha=0.5$.
} \label{fig1}
\end{figure}

\subsection{Numerical Results}\label{NumericalExperiments}
To test the convergence rates of the new scheme, we take
the computational domain $\Omega_i = [0,\pi]$, and set
\begin{align*}
&f(x,t)=
\begin{cases}
\Gamma(4+\alpha)x^4(\pi-x)^4\exp(-x)t^3\slash 6-x^2(\pi-x)^2\{t^{3+\alpha}\exp(-x)\\
\left[x^2(56-16x+x^2)-2\pi x(28-12x+x^2)+\pi^2(12-8x+x^2)\right]\\
+4(3\pi^2-14\pi x+14x^2)\},\quad(x,t)\in\Omega_i,\\
0,\quad(x,t)\notin\Omega_i,
\end{cases}
& \notag\\
&u_0(x)=
\begin{cases}
x^4(\pi-x)^4,\quad(x,t)\in\Omega_i,\\
0,\quad(x,t)\notin\Omega_i.
\end{cases}
\end{align*}
It is known that the IVP \eqref{problemdifa}--\eqref{problemdifc} has the exact solution given by the formula
\begin{align}\label{exactsol}
u(x,t)=
x^4(\pi-x)^4\left[\exp(-x)t^{3+\alpha}+1\right],\quad(x,t)\in\Omega_i \times (0,T].
\end{align}
To illustrate the performance of the numerical scheme, we define the maximum norm of the error and the convergence rates
with respect to temporal and spatial sizes, respectively by the formulas
\[E(h,\dt)=\sqrt{\dt\sum_{k=1}^N\left\|e^k\right\|_{\infty}^2},\quad r_t=\log_2\frac{E(h,\dt)}{E(h,\dt\slash2)},\quad
r_s=\log_2\frac{E(h,\dt)}{E(h\slash2,\dt)},\]
where the error $e^k$ is measured against the exact solution \eqref{exactsol}.

First, we check the convergence rate of the new scheme in time. We fix the spatial mesh size $h=\frac{\pi}{20000}$ and
refine the temporal mesh size $\dt$ from $\frac{1}{10}$ to $\frac{1}{160}$. Obviously, $h$ is chosen so small that the error due
to spatial discretization is negligible. The precision for the sum-of-exponentials approximation
for the convolution kernel is set to $\varepsilon=10^{-7}$. Table \ref{tab1} shows the numerical results for
two different fractional values: $\alpha=0.2 $ and $\alpha=0.5$. Next, we fix the temporal mesh size
$\dt=\frac{1}{30000}$ so that the error due to temporal discretization is neglible. We then change the spatial
step size $h$ from $\frac{\pi}{10}$ to $\frac{\pi}{160}$ to check the convergence order of the new scheme in space. Table \ref{tab2}
shows the numerical results for $\alpha=0.2 $ and $\alpha=0.5$. Table \ref{tab1} shows that the convergence order
in time is ${O}(\dt^{2-\alpha})$ for both the direct scheme \eqref{schm1}-\eqref{schm2} and our fast scheme
\eqref{Fschm1}-\eqref{Fschm2}. While Table \ref{tab2} shows that the convergence order
in space is ${O}(h^{2})$ for both the direct scheme and our fast scheme.

To demonstrate the complexity of the two schemes, we plot in Fig \ref{fig1} the
CPU time of the two schemes in seconds. We observe that while the direct scheme
scales like $O(N_T^2)$, the CPU time increases
almost linearly with the total number of time steps $N_T$ for the fast scheme.
There is a significant speed-up in fast scheme as compared with the
direct scheme even for $N_T$ of modest size.
\section{Application II: Nonlinear Fractional Diffusion Equation}
Consider now the initial value problem of the nonlinear fractional diffusion equation of the form
\begin{equation}\label{5.1}
\begin{aligned}
  _{0}^{C}\!D_{t}^{\alpha} u(x,t)&=u_{xx} + f(u),\\
  u(x,0)&= u_0(x).
\end{aligned}
\end{equation}
This problem has rich applications. When $f(u)=-u(1-u)$, \eqref{5.1} is the Fisher equation,
which is used to model the spatial and
temporal propagation of a virile gene in an infinite
medium \cite{fis37}, the chemical kinetics \cite{mal92},
flame propagation \cite{fra55}, and many other scientific problems \cite{mer12}.
When $f(u)=-0.1 u(1-u)(u-0.001)$, \eqref{5.1} is the time-fractional Huxley equation, which is used to
describe the transmission of nerve impulses \cite{fit61, nag62} with many applications in biology and
the population genetics in circuit theory \cite{shi05}.

When the initial data
$u_0(x)$ is compactly supported on $\Omega_i=[x_l,x_r]$,
the following finite difference scheme with artificial boundary conditions imposed
on two end points has been proposed
in \cite{LZ15} to solve the problem \eqref{5.1}
\begin{align}
\label{3.8}
&{^{C}_{0}}\mathbb{D}_t^{\alpha} U_i^{n}= \delta_x^2 U_i^{n} + f(U_i^{n-1}),\qquad\qquad \qquad 1\leq i \leq M-1, \\
&(\tilde{\delta}_x + 3s_{0}^{\frac{\alpha}{2}}) {^{C}_{0}}\mathbb{D}_t^\alpha U_{M-1}^{n} + (3s^\alpha_0\tilde{\delta}_x+ s_{0}^{\frac{3\alpha}{2}} ) {U_{M-1}^{n}} = (\tilde{\delta}_x + 3s_{0}^{\frac{\alpha}{2}}) f(U_{M-1}^{n-1}),\label{3.10}\\
&(\tilde{\delta}_x - 3s_{0}^{\frac{\alpha}{2}}){^{C}_{0}}\mathbb{D}_t^\alpha U_{1}^{n-1} + (3s^\alpha_0 \tilde{\delta}_x- s_{0}^{\frac{3\alpha}{2}} ) {U_{1}^{n-1}} = (\tilde{\delta}_x - 3s_{0}^{\frac{\alpha}{2}}) f(U_{1}^{n-1}).  \label{3.11}
\end{align}
Under the assumption that $f(u)\in C^2([0,T])$, it has been shown in \cite{LZ15} that the scheme \eqref{3.8}
the convergence rate of ${O}(h^2+\Delta t)$ in $L_{\infty}$ norm, defined by
$\left\|e\right\|_{\infty}=\max\limits_{0\leq i \leq M}\left|e_i\right|$. With the L1-approximation ${^{C}_{0}}\mathbb{D}_t^\alpha$
replaced by our fast evaluation scheme ${^{FC}_{0}}\mathbb{D}_t^\alpha$, we obtain a fast scheme for solving \eqref{5.1}, which is as follows:
\begin{align}
\label{3.9f}
&{^{FC}_{0}}\mathbb{D}_t^{\alpha} U_i^{n}= \delta_x^2 U_i^{n} + f(U_i^{n-1}),\qquad\qquad \qquad 1\leq i \leq M-1, \\
&(\tilde{\delta}_x + 3s_{0}^{\frac{\alpha}{2}}) {^{FC}_{0}}\mathbb{D}_t^\alpha U_{M-1}^{n} + (3s^\alpha_0\tilde{\delta}_x+ s_{0}^{\frac{3\alpha}{2}} ) {U_{M-1}^{n}} = (\tilde{\delta}_x + 3s_{0}^{\frac{\alpha}{2}}) f(U_{M-1}^{n-1}),\label{3.10f}\\
&(\tilde{\delta}_x - 3s_{0}^{\frac{\alpha}{2}}){^{FC}_{0}}\mathbb{D}_t^\alpha U_{1}^{n-1} + (3s^\alpha_0 \tilde{\delta}_x- s_{0}^{\frac{3\alpha}{2}} ) {U_{1}^{n-1}} = (\tilde{\delta}_x - 3s_{0}^{\frac{\alpha}{2}}) f(U_{1}^{n-1}).  \label{3.11f}
\end{align}

\subsection{Numerical Examples}
We will give two examples - the Fisher equation and the Huxley equation to illustrate the performance of our scheme. For both examples,
in order to investigate the convergence orders of our scheme, the reference solution is computed over a large interval
$\Omega_r = [-12,12]$ with very small mesh sizes $h=2^{-10}$, and $\Delta t=2^{-14}$. We then set
$\Omega_i = [-6,6]$ and the precision for the sum-of-exponentials approximation of the convolution kernel to
$\varepsilon=10^{-9}$. The temporal step size is fixed at $\Delta t=2^{-14}$ when testing the order of convergence
in space; and the spatial step size is fixed at $h = 2^{-10}$ when testing the order of convergence
in time.
\begin{example}\label{exm2}
\end{example} We consider the time fractional Fisher equation
\begin{equation}\label{fisher}
  _0^C\!{D}_t^\alpha u(x,t)= u_{xx} -u(1-u),~~0< t\leq 1
\end{equation}
with the double Gaussian initial value
$u(x,0)=\exp(-10(x-0.5)^2)+\exp(-10(x+0.5)^2)$. Tables \ref{tab3} and \ref{tab4}
present the numerical results for $\alpha = 0.2,\ 0.5$, which
show that our fast scheme (\ref{3.9f})-(\ref{3.11f}) has
the same convergence order ${O}(h^2+\Delta t)$ in $L_{\infty}$ norm as the direct scheme (\ref{3.8})-(\ref{3.11}),
but takes much less computational time.
\begin{table}[!ht]
  \caption{The errors and convergence orders in time for the fractional Fisher equation \eqref{fisher} with fixed spatial step size $h=2^{-10}$.} \label{tab3}
   \centering
    \begin{adjustwidth}{0cm}{0cm}
  \resizebox{!}{1.36cm}
  {
\begin{tabular}{cccccccccccc}
   \hline
 \multicolumn{1}{c}{\multirow{3}{0.5cm}{$\Delta t$}}& \multicolumn{5}{c}{$\alpha=0.2$} &\multicolumn{1}{c}{\multirow{1}{0cm}{}}  &\multicolumn{5}{c}{$\alpha=0.5$}\\
 \cline{2-6}\cline{8-12}\multicolumn{1}{c}{} &\multicolumn{2}{c}{Fast scheme
 } &\multicolumn{1}{c}{\multirow{2}{0.1cm}{}}  &\multicolumn{2}{c}{Direct scheme
   }&\multicolumn{1}{c}{\multirow{2}{0cm}{}}&\multicolumn{2}{c}{Fast scheme} &\multicolumn{1}{c}{\multirow{2}{0cm}{}}  &\multicolumn{2}{c}{Direct scheme}\\
\cline{2-3}  \cline{5-6}\cline{8-9} \cline{11-12}\multicolumn{1}{c}{}&\multicolumn{1}{c} {$\left\|e^n\right\|_{\infty}$} & \multicolumn{1}{c}{$r_t$}&\multicolumn{1}{c}{}  &\multicolumn{1}{c} {$\left\|e^n\right\|_{\infty}$} & \multicolumn{1}{c}{$r_t$} &\multicolumn{1}{c}{} &\multicolumn{1}{c} {$\left\|e^n\right\|_{\infty}$} & \multicolumn{1}{c}{$r_t$}&\multicolumn{1}{c}{}  &\multicolumn{1}{c} {$\left\|e^n\right\|_{\infty}$} & \multicolumn{1}{c}{$r_t$}\\ \hline
    $1\slash10$&8.601e-04&1.02&&8.601e-04&1.02&&2.194e-03&1.09&&2.194e-03&1.09\\
   $1\slash20$&4.243e-04&1.01&&4.243e-04&1.01&&1.030e-03&1.06&&1.030e-03&1.06\\
   $1\slash40$&2.104e-04&1.01&&2.104e-04&1.01&&4.934e-04&1.04&&4.934e-04&1.04\\
  $1\slash80$&1.046e-04&&&1.046e-04&&&2.392e-04&&&2.392e-04&\\\hline
    \end{tabular}
    }
    \end{adjustwidth}
    \end{table}
\begin{table}[!ht]
  \caption{The errors, convergence orders in space, and CPU time for the fractional Fisher equation \eqref{fisher} with fixed temporal step size $\Delta t=2^{-14}$.} \label{tab4}
   \centering
    \begin{adjustwidth}{0cm}{0cm}
  \resizebox{!}{1.5cm}
  {
\begin{tabular}{cccccccccccc}
   \hline
 \multicolumn{1}{c}{\multirow{3}{0.5cm}{$h$}}& \multicolumn{5}{c}{$\alpha=0.2$} &\multicolumn{1}{c}{\multirow{1}{0cm}{}}  &\multicolumn{5}{c}{$\alpha=0.5$}\\
\cline{2-6}\cline{8-12}\multicolumn{1}{c}{} &\multicolumn{2}{c}{Fast scheme} &\multicolumn{1}{c}{\multirow{2}{0.1cm}{}}  &\multicolumn{2}{c}{Direct scheme}&\multicolumn{1}{c}{\multirow{2}{0cm}{}}&\multicolumn{2}{c}{Fast scheme} &\multicolumn{1}{c}{\multirow{2}{0cm}{}}  &\multicolumn{2}{c}{Direct scheme}\\
\cline{2-3}  \cline{5-6}\cline{8-9} \cline{11-12}\multicolumn{1}{c}{}&\multicolumn{1}{c} {$\left\|e^n\right\|_{\infty}$} & \multicolumn{1}{c}{$r_s$}&\multicolumn{1}{c}{}  &\multicolumn{1}{c} {$\left\|e^n\right\|_{\infty}$} & \multicolumn{1}{c}{$r_s$} &\multicolumn{1}{c}{} &\multicolumn{1}{c} {$\left\|e^n\right\|_{\infty}$} & \multicolumn{1}{c}{$r_s$}&\multicolumn{1}{c}{}  &\multicolumn{1}{c} {$\left\|e^n\right\|_{\infty}$} & \multicolumn{1}{c}{$r_s$}\\ \hline
  $1\slash80$&8.196e-04&1.96&&8.196e-04&1.96&&6.045e-04&1.97&&6.045e-04&1.97\\
  $1\slash160$&2.112e-04&2.01&&2.112e-04&2.01&&1.547e-04&2.01&&1.547e-04&2.01\\
  $1\slash320$&5.246e-05&2.00&&5.246e-05&2.00&&3.842e-05&2.00&&3.842e-05&2.00\\
  $1\slash640$&1.316e-05&&&1.316e-05&&&9.649e-06&&&9.649e-06&\\\hline
   CPU(s)&\multicolumn{2}{c}{38.80}&&\multicolumn{2}{c}{1071.91}&&\multicolumn{2}{c}{40.22}&&\multicolumn{2}{c}{850.58}\\\hline
    \end{tabular}
    }
    \end{adjustwidth}
    \end{table}

\begin{example}
\end{example} We consider the fractional Huxley equation
\begin{equation}\label{huxley}
  _0^C\!{D}_t^\alpha u(x,t)= u_{xx} -0.1u(1-u)(u-0.001),~\quad 0< t\leq 1
\end{equation}
with the double Gaussian initial value
$u(x,0)=\exp(-10(x-0.5)^2)+\exp(-10(x+0.5)^2)$. Tables \ref{tab5} and
\ref{tab6} present the numerical results for $\alpha = 0.2,\ 0.5$, which show
that our fast scheme (\ref{3.9f})-(\ref{3.11f}) has
the same convergence order ${O}(h^2+\Delta t)$ in $L_{\infty}$ norm as the direct scheme (\ref{3.8})-(\ref{3.11}), but takes much less computational time.

To demonstrate the complexity of the two schemes, we plot in Fig. \ref{fig2} the
CPU time in seconds for both schemes. We observe that our fast scheme has almost linear complexity in $N_T$ and is much faster than the direct scheme.
\begin{table}[!ht]
  \caption{The errors and convergence orders in time for the fraction Huxley equation \eqref{huxley} with fixed spatial step size $h=2^{-10}$.} \label{tab5}
   \centering
    \begin{adjustwidth}{0cm}{0cm}
  \resizebox{!}{1.36cm}
  {
\begin{tabular}{cccccccccccc}
   \hline
 \multicolumn{1}{c}{\multirow{3}{0.5cm}{$\Delta t$}}& \multicolumn{5}{c}{$\alpha=0.2$} &\multicolumn{1}{c}{\multirow{1}{0cm}{}}  &\multicolumn{5}{c}{$\alpha=0.5$}\\
 \cline{2-6}\cline{8-12}\multicolumn{1}{c}{} &\multicolumn{2}{c}
       {Fast scheme} &\multicolumn{1}{c}{\multirow{2}{0.1cm}{}}  &\multicolumn{2}{c}{Direct scheme}&\multicolumn{1}{c}{\multirow{2}{0cm}{}}&\multicolumn{2}{c}
       {Fast scheme} &\multicolumn{1}{c}{\multirow{2}{0cm}{}}  &\multicolumn{2}{c}
       {Direct scheme}\\
\cline{2-3}  \cline{5-6}\cline{8-9} \cline{11-12}\multicolumn{1}{c}{}&\multicolumn{1}{c} {$\left\|e^n\right\|_{\infty}$} & \multicolumn{1}{c}{$r_t$}&\multicolumn{1}{c}{}  &\multicolumn{1}{c} {$\left\|e^n\right\|_{\infty}$} & \multicolumn{1}{c}{$r_t$} &\multicolumn{1}{c}{} &\multicolumn{1}{c} {$\left\|e^n\right\|_{\infty}$} & \multicolumn{1}{c}{$r_t$}&\multicolumn{1}{c}{}  &\multicolumn{1}{c} {$\left\|e^n\right\|_{\infty}$} & \multicolumn{1}{c}{$r_t$}\\ \hline
  $1\slash10$&1.089e-03&1.04&&1.089e-03&1.04&&2.961e-03&1.06&&2.961e-03&1.06\\
  $1\slash20$&5.304e-04&1.02&&5.304-04&1.02&&1.418e-03&1.04&&1.418e-03&1.04\\
 $1\slash40$&2.614e-04&1.01&&2.614e-04&1.01&&6.896e-04&1.03&&6.896e-04&1.03\\
  $1\slash80$&1.294e-04&&&1.294e-04&&&3.379e-04&&&3.379e-04&\\\hline
    \end{tabular}
    }
    \end{adjustwidth}
\end{table}
\begin{table}[!ht]
  \caption{The errors, convergence orders in space, and CPU time for the fractional Huxley equation \eqref{huxley} with fixed temporal step size $\Delta t=2^{-14}$.} \label{tab6}
   \centering
    \begin{adjustwidth}{0cm}{0cm}
  \resizebox{!}{1.5cm}
  {
\begin{tabular}{cccccccccccc}
   \hline
 \multicolumn{1}{c}{\multirow{3}{0.5cm}{$h$}}& \multicolumn{5}{c}{$\alpha=0.2$} &\multicolumn{1}{c}{\multirow{1}{0cm}{}}  &\multicolumn{5}{c}{$\alpha=0.5$}\\
 \cline{2-6}\cline{8-12}\multicolumn{1}{c}{} &\multicolumn{2}{c}
       {Fast scheme} &\multicolumn{1}{c}{\multirow{2}{0.1cm}{}}  &\multicolumn{2}{c}{Direct scheme}&\multicolumn{1}{c}{\multirow{2}{0cm}{}}&\multicolumn{2}{c}
       {Fast scheme} &\multicolumn{1}{c}{\multirow{2}{0cm}{}}  &\multicolumn{2}{c}
       {Direct scheme}\\
\cline{2-3}  \cline{5-6}\cline{8-9} \cline{11-12}\multicolumn{1}{c}{}&\multicolumn{1}{c} {$\left\|e^n\right\|_{\infty}$} & \multicolumn{1}{c}{$r_s$}&\multicolumn{1}{c}{}  &\multicolumn{1}{c} {$\left\|e^n\right\|_{\infty}$} & \multicolumn{1}{c}{$r_s$} &\multicolumn{1}{c}{} &\multicolumn{1}{c} {$\left\|e^n\right\|_{\infty}$} & \multicolumn{1}{c}{$r_s$}&\multicolumn{1}{c}{}  &\multicolumn{1}{c} {$\left\|e^n\right\|_{\infty}$} & \multicolumn{1}{c}{$r_s$}\\ \hline
  $1\slash80$&9.051e-04&1.96&&9.051e-04&1.96&&6.882e-04&1.98&&6.882e-04&1.98\\
   $1\slash160$&2.319e-04&2.01&&2.319e-04&2.01&&1.746e-04&2.01&&1.746e-04&2.01\\
$1\slash320$&5.762e-05&2.00&&5.762e-05&2.00&&4.344e-05&2.00&&4.344e-05&2.00\\
  $1\slash640$&1.447e-05&&&1.447e-05&&&1.091e-05&&&1.091e-05&\\\hline
   CPU(s)&\multicolumn{2}{c}{38.69}&&\multicolumn{2}{c}{1106.58}&&\multicolumn{2}{c}{40.14}&&\multicolumn{2}{c}{855.87}\\\hline
    \end{tabular}
    }
    \end{adjustwidth}
\end{table}
\begin{figure}[htbp]
  \centering
  \includegraphics[width=0.45\textwidth]{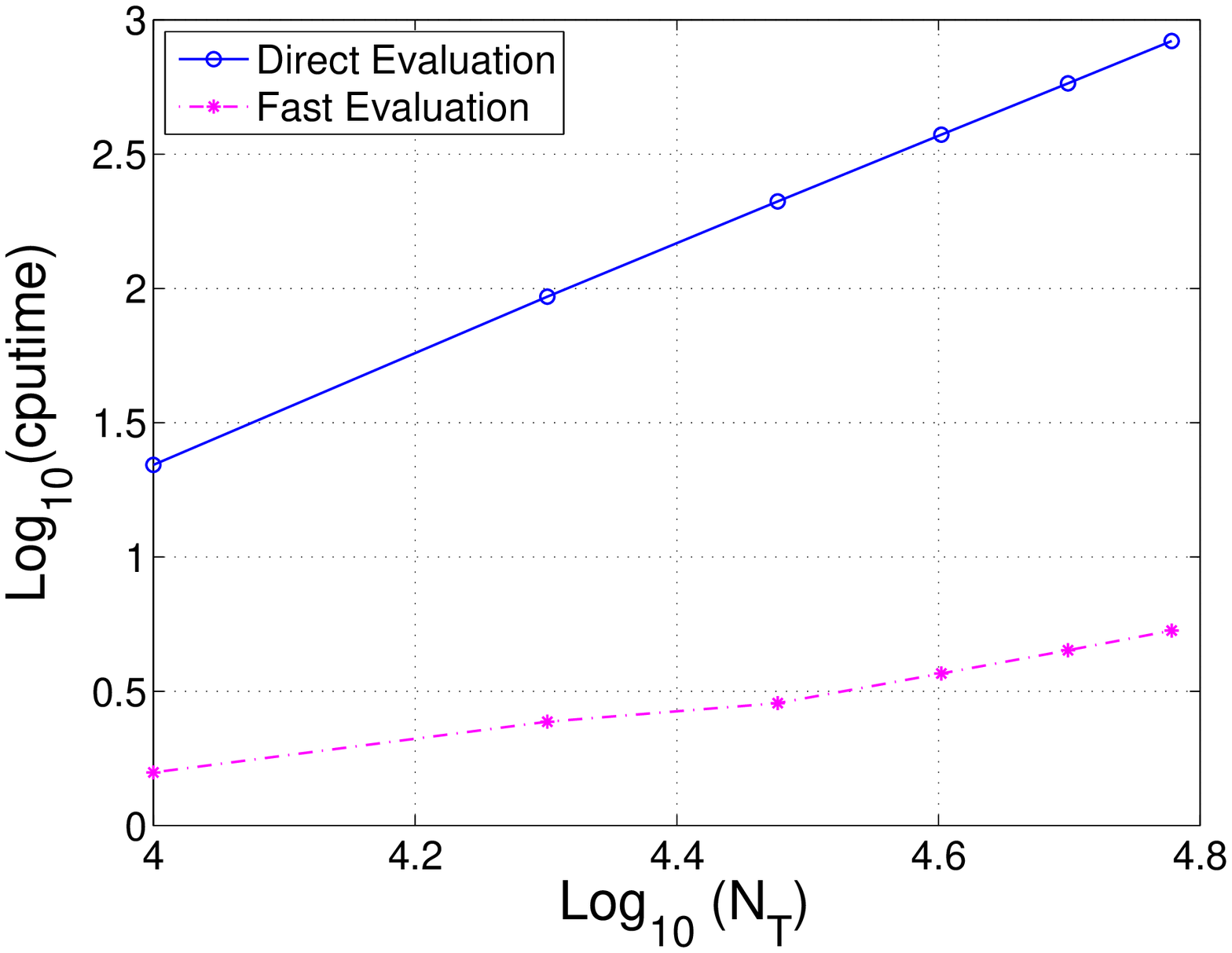}
   \includegraphics[width=0.45\textwidth]{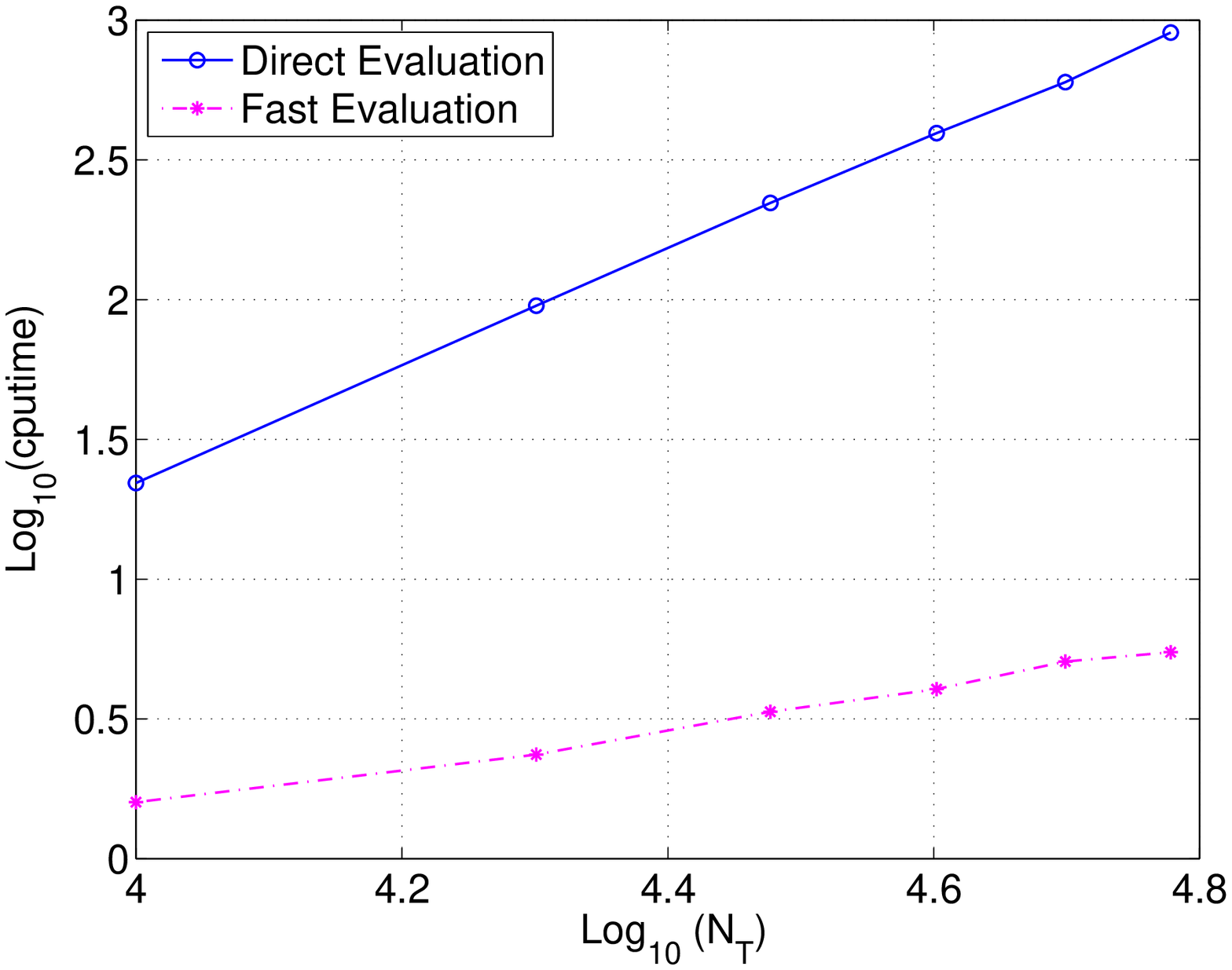}
  \caption{The log-log (in base $10$) plot of the CPU time (in seconds) versus the
    total number of time steps $N_T$ for two schemes. Here $N_S=80$ and
    $\alpha=0.5$. The left panel shows the results for the Fisher equation, and
    the right panel shows the results for the Huxley equation.}
    \label{fig2}
\end{figure}
\section{Conclusions}\label{Conclusions}
We have developed a fast algorithm for the evaluation of the Caputo
fractional derivative $_{0}^{C}\!D_{t}^{\alpha}f(t)$ for $0<\alpha<1$. The algorithm
relies on an efficient sum-of-exponentials approximation for the convolution
kernel $t^{-1-\alpha}$ with the absolute error $\varepsilon$ over the interval
$[\dt, T]$. Specifically, we have shown that the number of exponentials
needed
in the approximation is of the order $O\left(\log\frac{1}{\varepsilon}\left(
\log\log\frac{1}{\varepsilon}+\log\frac{T}{\dt}\right)+\right.$
$\left.\log\frac{1}{\dt}\left(
\log\log\frac{1}{\varepsilon}+\log\frac{1}{\dt}\right)
\right)$, which removes the term $O\left(\log^2\frac{1}{\varepsilon}\right)$
in \cite{beylkin3,Li10SISC}. The resulting algorithm has nearly optimal
complexity in both CPU time and storage.

We then applied our fast evaluation scheme of the Caputo derivative
to solve the fractional diffusion equations. We first demonstrated that it is
straightforward to incorporate our fast algorithm into the existing finite
difference schemes for solving the fractional diffusion equations. We
then proved a prior estimate about the solution of our new FD scheme which
leads to the stability of the new scheme. We also presented a rigorous error
bound for the new scheme. Finally, the numerical results on linear and nonlinear
fraction diffusion equations show that our new scheme has the same order of
convergence as the existing standard FD schemes, but with nearly optimal
complexity in CPU time and storage.

Our work can be extended along several directions. First, it is straightforward
to design high order schemes for the evaluation of fractional derivatives.
Second, one may develop fast high-order algorithms for solving fractional PDEs which contains
fractional derivatives in both time and space when the current scheme is combined
with other existing schemes \cite{ChenMC12,CuiJCP09,CuiJCP12,jiang4}.
Third, efficient and stable artificial boundary conditions can be designed using similar techniques
in \cite{jiang3} for solving
fractional PDEs in high dimensions. These issues are currently under investigation and the results
will be reported on a later date.
%
%
%
%
%
%
\appendix
\section{The Proof of Lemma \ref{lemma3}}
\begin{proof}
  Applying the definition \eqref{defF} of the fast evaluation scheme and the
  Cauchy-Schwarz inequality, we have
\begin{align}\label{a1}
  ({^{FC}_{0}}\mathbb{D}_t^{\alpha}g^{k})g^k&=\frac{1}{\Delta t^{\alpha}\Gamma(1-\alpha)}\bigg(\frac{1}{1-\alpha}(g^k)^2-(\frac{\alpha}{1-\alpha}+a_0)g^{k-1}g^k\nonumber\\
&\quad  -\sum_{l=1}^{k-2}(a_{k-l-1}+b_{k-l-2})g^lg^k
-(b_{k-2}+\frac{1}{2k^{\alpha}})g^0g^k\bigg)\nonumber\\
  &\geq\frac{1}{\Delta t^{\alpha}\Gamma(1-\alpha)}\left[\bigg(\frac{2-\alpha}{2(1-\alpha)}-\frac{1}{2}\sum_{l=0}^{k-2}(a_{l}+b_{l})-\frac{1}{k^{\alpha}}\bigg)(g^{k})^2
    \right.\nonumber\\
    &\quad \left.-\frac{1}{2}(\frac{\alpha}{1-\alpha}+a_0)(g^{k-1})^2
    -\frac{1}{2}\sum_{l=1}^{k-2}(a_{k-l-1}+b_{k-l-2})(g^l)^2\right.\nonumber\\
   &\quad\left. -\frac{1}{2}(b_{k-2}+\frac{1}{k^{\alpha}})(g^0)^2\bigg)\right].
\end{align}
Summing the above inequality from $k=1$ to $n$, we obtain
\begin{align}\label{a2}
&\Delta t\sum_{k=1}^n({^{FC}_{0}}\mathbb{D}_t^{\alpha}g^{k})g^k
\geq\frac{\Delta t^{1-\alpha}}{\Gamma(1-\alpha)}\left[\sum_{k=2}^n\bigg(\frac{1}{1-\alpha}-\frac{\alpha}{2(1-\alpha)}-\frac{1}{2}\sum_{l=0}^{k-2}(a_{l}+b_{l})-\frac{1}{2k^{\alpha}}\bigg)(g^{k})^2\right.\nonumber\\
&-\left.\sum_{k=2}^n\bigg(\frac{\alpha}{2(1-\alpha)}+\frac{a_0}{2}\bigg)(g^{k-1})^2-\frac{1}{2}\sum_{k=2}^n\sum_{l=1}^{k-2}\bigg(a_{k-l-1}+b_{k-l-2}\bigg)(g^l)^2\right.\nonumber\\
&+\left.\frac{1}{2(1-\alpha)}(g^1)^2-\frac{1}{2(1-\alpha)}(g^0)^2-\frac{1}{2}\sum_{k=2}^n\bigg(b_{k-2}+\frac{1}{k^{\alpha}}\bigg)(g^0)^2\right]\nonumber\\
&= \frac{\Delta t^{1-\alpha}}{\Gamma(1-\alpha)}\sum_{k=1}^n\bigg(C_k(g^k)^2-C_0(g^0)^2\bigg),
\end{align}
%
%
%
%
%
%
where the coefficients $C_k$ ($k=0,1,\cdots,n$) are given by the formula
\begin{align}\label{a3}
C_k =\left \{ \begin{array}{ll}
  \frac{1}{2(1-\alpha)}+\frac{1}{2}\sum\limits_{k=2}^n (b_{k-2}+\frac{1}{k^\alpha}), & \; k=0,\\
  \frac{1}{2}-\frac{1}{2}\sum\limits_{l=0}^{n-2}(a_l+b_{l}) +\frac{1}{2}b_{n-2}, & \;  k = 1,\\
  1-\frac{1}{2}\sum\limits_{l=0}^{k-2}(a_l+b_l)-\frac{1}{2k^\alpha}-\frac{1}{2}\sum\limits_{l=0}^{n-k-1}(a_{l}+b_{l})+\frac{1}{2}b_{n-k-1},  & \; 2\leq k< n,\\
\frac{2-\alpha}{2(1-\alpha)}-\frac{1}{2}\sum\limits_{l=0}^{k-2}(a_l+b_l)-\frac{1}{2k^\alpha},& \; k=n.
\end{array} \right.
\end{align}
From \eqref{2.2.1}, we have the estimate
\begin{equation}\label{a4}
\frac{1}{t^{1+\alpha}}-\varepsilon\leq\sum_{j=1}^{\nexp}\omega_j{e^{-s_jt}}\leq\frac{1}{t^{1+\alpha}}+\varepsilon.
\end{equation}
It is also straightforward to verify that
\begin{equation}\label{a5}
  \sum_{l=0}^{k-2}(a_l+b_l)
  =\alpha\Delta t^{\alpha}\int_{\Delta t}^{k\Delta t}
  \sum_{j=1}^{\nexp}\omega_j{e^{-s_jt}}\mathrm{d}t.
\end{equation}
Combining \eqref{a4} and \eqref{a5}, we obtain
\begin{equation}\label{a6}
 (1-\frac{1}{k^\alpha})-\alpha\Delta t^\alpha t_{k-1}\varepsilon\leq\sum_{l=0}^{k-2}(a_l+b_l)\leq(1-\frac{1}{k^\alpha})+\alpha\Delta t^\alpha t_{k-1}\varepsilon,
\end{equation}
%
%
%
%
%
%
Substituting \eqref{a6} into \eqref{a3} yields the following estimates

\begin{align}\label{a7}
\left \{ \begin{array}{ll}
C_0&\leq\frac{n^{1-\alpha}}{(1-\alpha)}+\alpha\Delta t^\alpha t_{n-1}\varepsilon,\\
C_1&\geq\frac{1}{2}-\frac{1}{2}\sum\limits_{l=0}^{n-2}(a_l+b_l)\geq\frac{1}{2n^{\alpha}}-\alpha\Delta t^\alpha t_{n-1}\varepsilon,\\
C_k&=1-\frac{1}{2}\sum\limits_{l=0}^{k-2}(a_l+b_l)-\frac{1}{2k^\alpha}-\frac{1}{2}\sum\limits_{l=0}^{n-k-1}(a_{l}+b_{l})+\frac{1}{2}b_{n-k-1}\\
&\geq\frac{1}{2n^{\alpha}}-\alpha\Delta t^\alpha t_{n-1}\varepsilon,\quad 2\leq k\leq n-1,\\
C_n&\geq\frac{2-\alpha}{2(1-\alpha)}-\sum\limits_{l=0}^{n-2}(a_l+b_l)-\frac{1}{2n^\alpha}\geq\frac{1}{2n^{\alpha}}-\alpha\Delta t^\alpha t_{n-1}\varepsilon.\\
\end{array} \right.
\end{align}
Combining \eqref{a2} and \eqref{a7}, we obtain Lemma \ref{lemma3}.
\end{proof}
\end{document}